\documentclass[a4paper,12pt,oneside,headsepline,draft]{scrartcl}

\usepackage{etoolbox}
\usepackage{ifdraft}

\usepackage[utf8]{luainputenc}
\usepackage[T1]{fontenc}


\usepackage{lmodern}
\usepackage{fourier}

\usepackage{amssymb, amsfonts}

\usepackage{mathrsfs} 
\usepackage{dsfont}
\usepackage[usenames,dvipsnames]{xcolor}
\usepackage{lettrine}
\usepackage{url}


\usepackage{stmaryrd}


\usepackage[english]{babel}


\usepackage[draft=false]{scrlayer-scrpage}

\usepackage{wrapfig}
\usepackage{footmisc}
\usepackage{needspace}

\usepackage{amsmath}
\usepackage{nccmath}
\usepackage[thmmarks, amsmath, amsthm]{ntheorem}



\usepackage[pdftex,final,
            pdfborder={0 0 0}, colorlinks=true,
            linkcolor=BrickRed, citecolor=ForestGreen, urlcolor=RoyalBlue]{hyperref}


\ifdraft{%
\usepackage[textsize=scriptsize,colorinlistoftodos]{todonotes}
}{}

\usepackage{booktabs}
\usepackage[inline]{enumitem}
\usepackage{float}
\usepackage{graphicx}
\usepackage{multicol}
\usepackage{scrtime}

\ifdraft{\date{\today}}{\date{}}


\linespread{1.05}





\KOMAoptions{DIV=10}


\pagestyle{scrheadings}
\clearscrheadings
\automark[section]{subsection}

\renewcommand{\subsectionmark}[1]{}

\cfoot[--\,\pagemark\,--]{--\,\pagemark\,--}
\lohead{{\small \headertitle}}
\rohead{{\small \headerauthors}}


\RedeclareSectionCommand[%
  font=\Large\sffamily\bfseries,%
  beforeskip=1\baselineskip,%
  afterskip=0.5\baselineskip,%
  indent=0em%
  ]{section}

\RedeclareSectionCommands[%
  font=\normalfont\bfseries,%
  afterskip=-1em%
  ]{subsection,subsubsection}

\RedeclareSectionCommands[%
  font=\normalfont\itshape,%
  afterskip=-1em,%
  indent=0pt,
  ]{paragraph}






\newenvironment{enumerateroman}{
\begin{enumerate}[label=(\roman*), leftmargin=0pt,labelindent=2em,itemindent=!]
}{
\end{enumerate}
}

\newenvironment{enumeratearabic*}{
\begin{enumerate*}[label=(\arabic*)] 
}{
\end{enumerate*}
}

\newenvironment{enumerateroman*}{
\begin{enumerate*}[label=(\roman*)] 
}{
\end{enumerate*}
}


\numberwithin{equation}{section}

\theoremnumbering{arabic}
\newtheorem{theoremcounter}{theoremcounter}[section]
\theoremnumbering{Alph}
\newtheorem{maintheoremcounter}{maintheoremcounter}

\theoremstyle{plain}

\newtheorem{lemma}[theoremcounter]{Lemma}

\newtheorem{proposition}[theoremcounter]{Proposition}
\newtheorem{theorem}[theoremcounter]{Theorem}

\theoremstyle{plain}

\newtheorem{maintheorem}[maintheoremcounter]{Theorem}

\theoremstyle{definition}

\theoremstyle{remark}

\newtheorem{remark}[theoremcounter]{Remark}

\theoremstyle{nonumberremark}

\newtheorem{remarkcomputation}{Computation}

\newenvironment{mainremarkenumerate}
{%
\mainremark
\enumeratearabic
}{%
\endenumeratearabic
\endmainremark
}%


%







\let\cal\undefined


\ifdraft{
 
}{
 
}



\newcommand{\tx}{\ensuremath{\text}}

\newcommand{\tbf}{\bfseries}



\newcommand{\nbd}{\nobreakdash-\hspace{0pt}}



\newcommand{\bboard}{\ensuremath{\mathbb}}
\newcommand{\cal}{\ensuremath{\mathcal}}


\newcommand{\bbM}{\ensuremath{\bboard M}}


\newcommand{\cO}{\ensuremath{\cal{O}}}




\newcommand{\rmd}{\ensuremath{\mathrm{d}}}

\newcommand{\rmf}{\ensuremath{\mathrm{f}}}

\newcommand{\rms}{\ensuremath{\mathrm{s}}}


\newcommand{\rmM}{\ensuremath{\mathrm{M}}}

\newcommand{\rmU}{\ensuremath{\mathrm{U}}}



\newcommand{\td}{\tilde}

\newcommand{\ov}{\overline}



\newcommand{\ra}{\ensuremath{\rightarrow}}

\newcommand{\amid}{\ensuremath{\mathop{\mid}}}



\newcommand{\ZZ}{\ensuremath{\mathbb{Z}}}
\newcommand{\QQ}{\ensuremath{\mathbb{Q}}}

\newcommand{\CC}{\ensuremath{\mathbb{C}}}


\renewcommand{\Im}{\ensuremath{\mathrm{Im}}}

\newcommand{\isdiv}{\amid}
\newcommand{\nisdiv}{\ensuremath{\mathop{\nmid}}}

\renewcommand{\pmod}[1]{\ensuremath{\;(\mathrm{mod}\, #1)}}





\newenvironment{psmatrix}{\left(\begin{smallmatrix}}{\end{smallmatrix}\right)}





\newcommand{\GL}[1]{\ensuremath{\mathrm{GL}_{#1}}}

\newcommand{\SL}[1]{\ensuremath{\mathrm{SL}_{#1}}}







\renewcommand{\det}{\ensuremath{\mathrm{det}}}



















\newcommand{\HS}{\mathbb{H}}








\newcommand{\ga}{\ensuremath{\gamma}}
\newcommand{\Ga}{\ensuremath{\Gamma}}

\newcommand{\hol}{\ensuremath{\mathrm{hol}}}

\usepackage[backend=bibtex]{biblatex}
\usepackage[english]{babel}  

\addbibresource{bibliography.bib}

\newcommand{\ord}{\operatorname{ord}}
\renewcommand{\max}{\operatorname{max}}


\newcommand{\headertitle}{{\normalfont%
  Congruences of Hurwitz class numbers%
}}
\newcommand{\headerauthors}{%
  O.~Beckwith, M.~Raum, O.~K.~Richter%
}
\title{%
  Congruences of Hurwitz class numbers on square classes
}
\author{%
Olivia Beckwith%
\and
Martin Raum%
\thanks{The author was partially supported by Vetenskapsr\aa det Grants~2015-04139 and~2019-03551.}%
\and
Olav K. Richter
\thanks{The author was partially supported by Simons Foundation Grants~\#412655 and~\#835652.}
}
\date{\today\ at\ \thistime}

\begin{document}

\thispagestyle{scrplain}
\begingroup
\deffootnote[1em]{1.5em}{1em}{\thefootnotemark}
\maketitle
\endgroup


\begin{abstract}
\small
\noindent
{\tbf Abstract:}
We extend a holomorphic projection argument of our earlier work to prove a novel divisibility result for non-holomorphic congruences of Hurwitz class numbers. This result allows us to establish Ra\-ma\-nu\-jan-type congruences for Hurwitz class numbers on square classes, where the holomorphic case parallels previous work by Radu on partition congruences. We offer two applications. The first application demonstrates common divisibility features of Ra\-ma\-nu\-jan-type congruences for Hurwitz class numbers.  The second application provides a dichotomy between congruences for class numbers of imaginary quadratic fields and Ra\-ma\-nu\-jan-type congruences for Hurwitz class numbers.
\\[.3\baselineskip]
\noindent
\textsf{\textbf{%
  Hurwitz class numbers%
}}%
\noindent
{\ {\tiny$\blacksquare$}\ }%
\textsf{\textbf{%
  Ramanujan-type congruences%
}}%
\noindent
{\ {\tiny$\blacksquare$}\ }%
\textsf{\textbf{%
  holomorphic projection%
}}
\\[.2\baselineskip]
\noindent
\textsf{\textbf{%
  MSC Primary:
  11E41%
}}%
{\ {\tiny$\blacksquare$}\ }%
\textsf{\textbf{%
  MSC Secondary:
  11F33, 11F37%
}}
\end{abstract}

%


\Needspace*{4em}
\addcontentsline{toc}{section}{Introduction}
\markright{Introduction}

\lettrine[lines=2,nindent=.2em]{\tbf H}{urwitz} class numbers~$H(D)$ play a significant role in classical number theory. If~$-D < -4$ is a negative fundamental discriminant, then~$h(-D) = H(D)$ is the class number of the imaginary quadratic field~$\QQ(\sqrt{-D})$. Despite intensive studies, divisibility properties of these class numbers have remained mysterious.  

In this work, we investigate Ramanujan-type congruences for Hurwitz class numbers (see Theorem~\ref{mainthm:nonholomorphic-ell-divides-b}, \ref{mainthm:square-classes}, and~\ref{mainthm:a-b-ppowers}).  Furthermore, in Theorem~\ref{mainthm:dichotomy} we connect Ramanujan-type congruences for Hurwitz class numbers~$H(D)$ to congruences for class numbers~$h(-D)$ in certain families of fundamental discriminants~$-D$.

In~\cite{beckwith-raum-richter-2020}, we explored Ramanujan-type congruences for Hurwitz class numbers~$H(D)$ such as the following examples:
\begin{gather*}
\begin{aligned}
  H(5^3 n + 5^2) &\equiv 0 \;\pmod{5}
\tx{,}\\
  H(7^3 n + 3 \cdot 7^2) &\equiv 0 \;\pmod{7}
\tx{,}\\
  H(11^3 n + 7 \cdot 11^2) &\equiv 0 \;\pmod{11}
\tx{.}
\end{aligned}
\end{gather*}
These congruences are of the form~$H(an + b) \equiv 0\, \pmod{\ell}$, where $\ell>3$ is a prime and $a > 0$ and~$b$ are integers such that~$-b$ is a square modulo~$a$. We refer to such congruences as non-holomorphic Ramanujan-type congruences, because  the generating series for $H(an+b)$ is a mock modular form, i.e., it has a non-holomorphic modular completion. In particular, one cannot access such congruences via standard techniques from the theory of holomorphic modular forms.

In our earlier work~\cite{beckwith-raum-richter-2020} we employed a holomorphic projection argument to prove that for such non-holomorphic congruences the divisibility $\ell \isdiv a$ holds. The above examples also suggest the divisibility $\ell \isdiv b$, and in this current paper we use another holomorphic projection argument to prove:

\begin{maintheorem}
\label{mainthm:nonholomorphic-ell-divides-b}
Let $\ell>3$ be a prime, $a \in \ZZ_{\ge 1}$, and $b \in \ZZ$. If\/ $-b$ is a square modulo~$a$ and
\begin{gather*}
  H(a n + b) \equiv 0 \;\pmod{\ell}
\end{gather*}
for all integers~$n$, then~$\ell \isdiv b$.
\end{maintheorem}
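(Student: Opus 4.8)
The plan is to reduce to a genuinely mock-modular situation by sieving $\wht{\cH}$ onto the residue class $b$ modulo $a$, and then to run a weight-two holomorphic projection in the spirit of~\cite{beckwith-raum-richter-2020}, arranged so that $b$ enters the arithmetic of the resulting (generalised) Hurwitz--Kronecker class number relation. First I would invoke the main result of~\cite{beckwith-raum-richter-2020}, which already gives $\ell \isdiv a$. Since $\ell \isdiv a$, whether $\ell \isdiv b$ holds depends only on $b$ modulo $a$, while neither the hypothesis that $-b$ be a square modulo $a$ nor the congruences $H(an+b)\equiv 0\pmod{\ell}$ is affected by replacing $b$ by another representative of $b + a\ZZ$; hence we may assume $0 \le b < a$. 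If $b = 0$ there is nothing to prove, so assume $0 < b < a$ and, for contradiction, that $\ell \nmid b$. Let $\cH$ denote Zagier's weight-$\tfrac32$ mock modular Eisenstein series with holomorphic part $\sum_{D\ge 0} H(D)\,q^{D}$, and let $g = g_{a,b}$ be obtained from the harmonic completion $\wht{\cH}$ by the sieving operator retaining the Fourier terms supported on exponents $\equiv b\pmod{a}$. Then $g$ is the holomorphic part of a harmonic Maass form $\wht{g}$ of weight $\tfrac32$ on a suitable congruence subgroup whose shadow is a nonzero multiple of the unary theta series $\vartheta_{a,b}(\tau) = \sum_{f^{2}\equiv -b\,(a)} q^{f^{2}}$ --- nonzero precisely because $-b$ is a square modulo $a$, and with vanishing constant term since $0 < b < a$. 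By hypothesis every coefficient of the holomorphic part is divisible by $\ell$, i.e.\ $g \equiv 0 \pmod{\ell}$.

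Next I would fix a holomorphic unary theta series $\Theta$ of weight $\tfrac12$, form the weight-two (non-holomorphic) product $\wht{g}\,\Theta$, subtract a suitable holomorphic weight-two Eisenstein series to secure moderate growth at all cusps, and apply holomorphic projection $\pi_{\mathrm{hol}}$. Following~\cite{beckwith-raum-richter-2020}, $\pi_{\mathrm{hol}}(\wht{g}\,\Theta)$ is a holomorphic (quasi)modular form of weight two and bounded level whose $m$-th Fourier coefficient is a finite $\ZZ$-linear combination of the values $H(an'+b)$ --- hence divisible by $\ell$ --- plus an explicit ``class number correction'' $\lambda_{a,b}(m)$, built from $\min$-type sums over pairs $(s,f)$ with $s^{2}-f^{2}$ in a fixed ratio to $m$ and $f^{2}\equiv -b\pmod{a}$, coming from pairing the Eichler-integral part of $\wht{g}$ with $\Theta$. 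We therefore obtain, modulo $\ell$, the congruence
\[
  \pi_{\mathrm{hol}}\bigl(\wht{g}\,\Theta\bigr)
  \;\equiv\;
  \sum_{m\ge 1}\lambda_{a,b}(m)\,q^{m} .
\]

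The main obstacle --- and the heart of the argument --- is to show that this congruence is impossible when $\ell\nmid b$. The left-hand side is a genuine weight-two (quasi)modular form lying in a fixed finite-dimensional space, whose coefficients are governed by divisor-sum (Eisenstein) and cusp-form data; the right-hand side records $b$ only through the support condition $f^{2}\equiv -b\pmod{a}$ on the shadow $\vartheta_{a,b}$. I would make the generalised class number relation for $\pi_{\mathrm{hol}}(\wht{g}\,\Theta)$ fully explicit --- this, together with verifying the growth condition needed for $\pi_{\mathrm{hol}}$, is where most of the work lies --- and then isolate one Fourier coefficient, for instance the one indexed by the smallest $m$ for which the $(s,f)$-sum defining $\lambda_{a,b}(m)$ is nonempty, or one extracted by a Hecke operator $T_{p}$ with $p\nmid N\ell$, in which $b$ appears as an explicit multiplicative factor modulo $\ell$. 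Since $\ell>3$ keeps the relevant normalising constants, theta-multiplicities, and Eisenstein coefficients units modulo $\ell$, the congruence then forces $b\equiv 0\pmod{\ell}$, contradicting $\ell\nmid b$.
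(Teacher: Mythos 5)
Your overall strategy---arguing by contraposition, sieving $E_{3/2}$ onto the class $b \bmod a$, multiplying by a unary theta series, and applying weight-two holomorphic projection so that the Eichler-integral part of the completion pairs with the theta series to produce explicit $\min$-type divisor sums---is exactly the strategy of the paper (Section~\ref{sec:non-holomorphic-congruences}). But the proposal has a genuine gap precisely at what you yourself call ``the heart of the argument.'' You assert that one can ``isolate one Fourier coefficient \dots{} in which $b$ appears as an explicit multiplicative factor modulo $\ell$.'' This is not how $b$ enters: in the projected coefficients, $b$ appears only through the support condition $\td\beta^2 \equiv -b \pmod a$ on the shadow and through the congruence conditions $d_1 \equiv \beta + \td\beta$, $d_2 \equiv \beta - \td\beta \pmod a$ on the divisors of $an$ (see~\eqref{eq:eisenstein-theta-holomorphic-projection-coefficient}). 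No coefficient is a unit times $b$. What the hypothesis $\ell \nmid b$ actually buys (together with $\ell \isdiv a$ from the earlier work) is $\ell \nmid \beta$, hence $\ell \nmid a' := \gcd(a,2\beta)$, so that certain explicitly computed coefficients are $\equiv -a' \not\equiv 0 \pmod \ell$. And a single nonvanishing coefficient is not by itself a contradiction: there is no a priori reason a weight-two quasi-modular form of the given level cannot be congruent to your $\sum \lambda_{a,b}(m) q^m$ modulo $\ell$. The paper derives the contradiction by comparing \emph{two} coefficients, $c(a'p)\equiv 0$ versus $c(a'pp') \equiv -a'$ (or $c(a')$ versus $c(a'p')$), against the multiplicative relation $c(np') \equiv 2\,c(n) \pmod\ell$ supplied by Serre's theorem for the special primes $p' \equiv 1 \pmod{\ell N}$ (Theorem~\ref{thm:serre}). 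Your passing mention of ``a Hecke operator $T_p$ with $p \nmid N\ell$'' gestures at this, but without Serre's theorem you have no control over the Hecke eigenvalue modulo $\ell$, and the argument does not close.

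A second, subsidiary gap: normalising $0 \le b < a$ is not enough to make the coefficient computation tractable. The paper must first pass to a carefully chosen sub-progression (Lemma~\ref{la:subprogression}: enlarge $a$ so that every $\gcd(a_p, 2\beta)$ is a proper divisor of $a_p$ and so that $a$ has a prime factor $q_a$ with $a < q_a^2$ and $2\beta < q_a$), and must choose the auxiliary theta series to be $\theta_{a,\beta} + \theta_{a,-\beta}$ for the specific square root $\beta$ of $-b$. These choices are what guarantee that only the two extreme subsets $Q = \emptyset$ and $Q = Q_a$ contribute to the divisor sums, so that $c(a'p)$ and $c(a'pp')$ can be evaluated exactly rather than merely described. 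Without both this reduction and the input from Serre's theorem, the proposal does not reach a contradiction.
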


There are also holomorphic Ramanujan-type congruences for Hurwitz class numbers, i.e., congruences~$H(a n + b) \equiv 0 \,\pmod{\ell}$ where $-b$ is not a square modulo~$a$: 
\begin{gather*}
\begin{aligned}
  H(3^3 n + 3^2) &\equiv 0 \;\pmod{5}
\tx{,}\\
  H(5^3 n + 2 \cdot 5^2) &\equiv 0 \;\pmod{7}
\tx{,}\\
  H(2^9 n + 3 \cdot 2^6) &\equiv 0 \;\pmod{11}
\tx{.}
\end{aligned}
\end{gather*}
In these examples~$\ell$ does not divide~$a$ or $b$. While such congruences can be studied with tools from the theory of holomorphic modular forms, the relation between~$a$ and~$b$ has not yet been resolved, either.

Our examples of holomorphic and non-holomorphic congruences indicate that $\ord_p\big(a \slash \gcd(a,b)\big) \le 1$ for odd primes $p$ and $\ord_2\big(a \slash \gcd(a,b)\big) \le 3$. To prove these phenomena, we first establish the following result on congruences of square classes:

\begin{maintheorem}
\label{mainthm:square-classes}
Let $\ell>3$ be a prime, $a \in \ZZ_{\ge 1}$, and $b \in \ZZ$. Suppose~$H(an + b) \equiv 0 \,\pmod{\ell}$ for all integers~$n$. Then $H(an + b u^2) \equiv 0 \,\pmod{\ell}$ for all integers~$u$ with $\gcd(u,a)=1$ and $n \in \ZZ$. 
\end{maintheorem}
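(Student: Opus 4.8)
The plan is to reduce to the case $u=p$ an odd prime not dividing~$a$, and then to push the congruence around the finite cyclic orbit of square classes $b,bp^{2},bp^{4},\dots$ modulo~$a$ via the Hecke relation for Hurwitz class numbers: since $\sum_{D\ge 0}H(D)q^{D}$ is the holomorphic part of a Hecke eigenform of weight~$3/2$ on~$\Gamma_{0}(4)$ with $T_{p^{2}}$-eigenvalue $1+p$ for every odd prime~$p$, one has for all $D\in\ZZ$
\begin{gather}
  H(p^{2}D)+\Big(\tfrac{-D}{p}\Big)H(D)+p\,H(D/p^{2})=(1+p)\,H(D),
  \label{eq:heckeH}
\end{gather}
with the convention $H(D/p^{2})=0$ unless $p^{2}\mid D$. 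For the reduction, put $a_{1}:=a/\gcd(a,b)$ and note that the residue $bu^{2}\bmod a$ is determined by $u\bmod a_{1}$; since $\gcd(u,a_{1})=1$, Dirichlet's theorem lets us replace a given~$u$ by an odd prime $p\equiv u\pmod{a_{1}}$ with $p\nmid a$, so that it suffices to prove $H(an+bp^{2})\equiv 0\pmod{\ell}$ for all~$n$. One may assume $a_{1}>1$ and $s:=\ord_{a_{1}}(p^{2})\ge 2$ (otherwise the progression is unchanged), and also $a\nmid b$, since otherwise $0$ lies in the progression and $H(0)=-\tfrac{1}{12}\not\equiv 0\pmod{\ell}$ already contradicts the hypothesis.

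I would then prove by induction on $t=0,1,\dots,s-1$ that $H\equiv 0\pmod{\ell}$ on the class $bp^{2t}\bmod a$, these classes exhausting the orbit (class~$s$ coincides with class~$0$) and $t=0$ being the hypothesis. Fix $m$ in the class $bp^{2t}\bmod a$. The key point is that $m/p^{2}$, when an integer, lies in the class $bp^{2(t-1)}\bmod a$, hence $H(m/p^{2})\equiv 0\pmod{\ell}$ by the inductive hypothesis (and trivially otherwise). Substituting this into~\eqref{eq:heckeH}, first for $D=m$ and then iteratively for $D=p^{2j}m$ with $j\ge 1$, one gets modulo~$\ell$ a linear recurrence $H(p^{2(j+1)}m)\equiv(1+p)H(p^{2j}m)-p\,H(p^{2(j-1)}m)$ with characteristic roots $1$ and~$p$; solving it, $H(p^{2j}m)\equiv c_{j}H(m)\pmod{\ell}$, where $c_{j}=1+p+\dots+p^{j}$ when $p\mid m$, and $c_{j}=p^{j}$ or $\tfrac{(p+1)p^{j}-2}{p-1}$ according to the value $(\tfrac{-m}{p})=\pm 1$ when $p\nmid m$. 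Choosing $j\in\{s-t,\dots,s-1\}$ so that $p^{2j}m$ lands in one of the already-treated classes $b,bp^{2},\dots,bp^{2(t-1)}$, the left side is $\equiv 0\pmod{\ell}$, and the inductive step closes as soon as one such $c_{j}$ is a unit modulo~$\ell$.

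The main obstacle is exactly this invertibility. For $t\ge 2$ there are at least two admissible~$j$, and consecutive $c_{j}$ differ by $p^{j}$ or $(p+1)p^{j}$, a unit since we may take $p\ne\ell$ (and if $\ell\mid p+1$ the relevant $c_{j}$ are all $\equiv 1$), so some $c_{j}$ is a unit; the base step $t=1$ with $p\nmid m$ and $(\tfrac{-m}{p})=1$ is immediate as well, since then $c_{j}=p^{j}$. The remaining difficulty is the base step when $p\mid m$, or when $p\nmid m$, $(\tfrac{-m}{p})=-1$ and $\ell\nmid p+1$: only $j=s-1$ is available, and its single structure constant $\tfrac{p^{s}-1}{p-1}$, respectively $\tfrac{(p+1)p^{s-1}-2}{p-1}$, may be divisible by~$\ell$. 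When $\ell\nmid a_{1}$ one avoids this by choosing~$p$ in its class modulo~$a_{1}$ so as to dodge the finitely many bad residues modulo~$\ell$; the case $\ell\mid a_{1}$, which forces $p\equiv\pm u\pmod{\ell}$, I expect to settle with the holomorphic projection machinery of the paper --- realizing $\sum_{D\equiv b\,(a)}H(D)q^{D}$ as the holomorphic part of a harmonic Maass form of weight~$3/2$ with shadow the unary theta series $\sum_{r^{2}\equiv b\,(a)}q^{r^{2}}$, and exploiting the tension between a holomorphic part vanishing modulo~$\ell$ and a nonzero shadow exactly as in the proof of Theorem~\ref{mainthm:nonholomorphic-ell-divides-b}; this tension is compatible with $b\mapsto bp^{2}$ on the shadow, which is what transports the congruence in the residual cases.
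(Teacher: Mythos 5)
Your route---the $T_{p^2}$ relation $H(p^2D)+\big(\tfrac{-D}{p}\big)H(D)+p\,H(D\slash p^2)=(1+p)H(D)$ combined with Dirichlet's theorem---is genuinely different from the paper's proof, which never touches Hecke operators: there the congruence is transported from $b$ to $bu^2$ by slashing $\rmU_{a,b}E_{\frac{3}{2}}$ with a matrix $\ga\equiv\begin{psmatrix} u & 0 \\ 0 & \ov{u}\end{psmatrix}\pmod{4a}$ (Lemma~\ref{la:modular-transformation-on_Uab}) and invoking the $q$\nbd expansion principle, after first multiplying by an auxiliary theta series and holomorphically projecting in the mock case. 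As written, however, your argument has a genuine gap exactly where you flag it, and neither patch closes it. The ``dodge the bad residues modulo $\ell$'' device can fail even when $\ell\nisdiv a_1$: in the subcase $p\isdiv m$ the obstruction is $1+p+\dots+p^{s-1}\equiv 0\pmod{\ell}$, and if $\ell(\ell-1)\isdiv s$ (which occurs for suitable $a_1$ and $u$, e.g.\ $a_1$ a prime $\equiv 1\pmod{2\ell(\ell-1)}$ with $u$ a primitive root) then \emph{every} residue $p\bmod\ell$ is bad. For $\ell\isdiv a_1$ you defer the decisive case to unspecified ``holomorphic projection machinery,'' which is an expectation, not a proof. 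Note also that the induction on $t$ does not help here: the whole theorem is equivalent to the $t=1$ step for varying $p$, which is precisely the step with essentially one admissible $j$.

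The good news is that your framework does close, with two modifications. First, choose the auxiliary prime per $m$ rather than per residue class: for fixed $m\equiv bu^2\pmod{a}$ take $p\equiv u\pmod{a_1}$ with $p\nisdiv 2a\ell m$. Then $\epsilon:=\big(\tfrac{-m}{p}\big)=\pm 1$, the term $H(m\slash p^2)$ vanishes trivially (no inductive hypothesis needed), and iterating your relation gives the exact identity $H(p^{2j}m)=c_jH(m)$ with $c_j=\sigma_1(p^j)-\epsilon\,\sigma_1(p^{j-1})$. Second, use the two exponents $j=s-1$ and $j=2s-1$, both of which place $p^{2j}m$ in the class of $b$ where $H\equiv 0\pmod{\ell}$ by hypothesis. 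Writing $c_j=A+Bp^j$ with $A+B=1$ and $B(p-1)=p-\epsilon$, the simultaneous vanishing $c_{s-1}\equiv c_{2s-1}\equiv 0\pmod{\ell}$ forces either $B\equiv 0$ (whence $c_{s-1}\equiv A\equiv 1$, absurd) or $p^s\equiv 1\pmod{\ell}$, which combined with $c_{s-1}\equiv 0$ yields $\epsilon\equiv 0\pmod{\ell}$---impossible for $\epsilon=\pm1$ and $\ell>3$ (the degenerate case $p\equiv 1\pmod{\ell}$, where $c_j\equiv (j+1)-\epsilon j$, is checked the same way). Hence some admissible $c_j$ is a unit and $H(m)\equiv 0\pmod{\ell}$. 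Carried out this way, your argument gives an elementary, uniform proof avoiding holomorphic projection altogether; but the repair must actually be made---the proposal as it stands leaves the only hard case open.
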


In the case of holomorphic Ramanujan-type congruences, the proof of Theorem~\ref{mainthm:square-classes} extends ideas of Radu's study of partition congruences~\cite{radu-2013,radu-2013}.  The proof of Theorem~\ref{mainthm:square-classes} in the non-holomorphic case requires a deeper analysis, where Theorem~\ref{mainthm:nonholomorphic-ell-divides-b} is an essential ingredient.

We give two applications of Theorem~\ref{mainthm:square-classes}. In our first application, we call a Ra\-ma\-nu\-jan-type congruences for~$H(D)$ modulo~$\ell$ on~$a \ZZ + b$ \emph{maximal}, if~$H(D)$ has no Ramanujan-type congruence modulo~$\ell$ on any arithmetic progression~$a' \ZZ + b'$ that is properly contained in~$a \ZZ + b$.

\begin{maintheorem}
\label{mainthm:a-b-ppowers}
Let $\ell>3$ be a prime. Suppose that we have a maximal Ramanujan-type congruence modulo~$\ell$ for the Hurwitz class numbers on~$a \ZZ + b$. Then for odd primes~$p$,
\begin{gather*}
  \ord_p\big( a \slash \gcd(a,b) \big) \le 1
\quad\tx{and}\quad
  \ord_2\big( a \slash \gcd(a,b) \big) \le 3
\tx{.}
\end{gather*}
\end{maintheorem}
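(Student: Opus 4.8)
The plan is a proof by contradiction that exploits the extra residue classes supplied by Theorem~\ref{mainthm:square-classes}. Put $g=\gcd(a,b)$, $a_0=a/g$ and $b_0=b/g$, so $\gcd(a_0,b_0)=1$ and the assertion is that $\ord_p(a_0)\le1$ for odd primes~$p$ and $\ord_2(a_0)\le3$. Assume this fails, say $e:=\ord_p(a_0)\ge2$ for an odd prime~$p$, or $e:=\ord_2(a_0)\ge4$. Then $p^2\isdiv a_0\isdiv a$, so $a':=a/p$ satisfies $a'\isdiv a$ and $a'<a$, and $a'\ZZ+b$ properly contains $a\ZZ+b$. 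I would derive a contradiction by showing that the given congruence forces $H(a'n+b)\equiv0\pmod\ell$ for all~$n$, a Ramanujan-type congruence modulo~$\ell$ on a strictly larger progression than $a\ZZ+b$, contradicting maximality.

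First I would restate Theorem~\ref{mainthm:square-classes}: combined with $H(an+b)\equiv0\pmod\ell$, it says exactly that $H(D)\equiv0\pmod\ell$ whenever~$D$ lies in a residue class $b\,u^2\bmod a$ with $\gcd(u,a)=1$; call such a class modulo~$a$ \emph{admissible}. As $\{u^2:\gcd(u,a)=1\}=\big((\ZZ/a\ZZ)^\times\big)^2$, the admissible classes are the classes $b\,s$ with $s\in\big((\ZZ/a\ZZ)^\times\big)^2$, and~$H$ vanishes modulo~$\ell$ on every integer lying in an admissible class. Hence, to prove $H(a'n+b)\equiv0\pmod\ell$ it suffices to show that each of the $p$ residue classes modulo~$a$ refining the class of~$b$ modulo~$a'$, namely $b+a'j\bmod a$ for $j=0,\dots,p-1$, is admissible.

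Next I would convert admissibility of $b+a'j\bmod a$ into a quadratic-residue condition. Since~$g$ divides $b$, $a'$ and~$a$, dividing through by~$g$ shows that $b+a'j\bmod a$ is admissible precisely when $u^2\equiv r_j\pmod{a_0}$ holds for some~$u$ coprime to~$a$, where $r_j:=1+b_0^{-1}(a_0/p)\,j$ with $b_0^{-1}$ an inverse of~$b_0$ modulo~$a_0$; here $r_j$ is a unit modulo~$a_0$. If $r_j$ is a square modulo~$a_0$, say $v^2\equiv r_j\pmod{a_0}$ with~$v$ (automatically) a unit modulo~$a_0$, then the Chinese remainder theorem produces a~$u\equiv v\pmod{a_0}$ that is a unit modulo~$a$ — one prescribes~$u$ freely at the primes dividing~$g$ but not~$a_0$ and at the extra prime powers over $\gcd(g,a_0)$, which is where the case $p\isdiv g$ gets absorbed. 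So it suffices that each~$r_j$ be a square in $(\ZZ/a_0\ZZ)^\times$; by the Chinese remainder theorem applied to~$a_0$ this is checked at each prime power $q^{f}$ exactly dividing~$a_0$. For $q\ne p$ one has $q^{f}\isdiv a_0/p$, so $r_j\equiv1\pmod{q^{f}}$, a square. For the power $p^{e}$ exactly dividing~$a_0$ one computes $r_j\equiv1+c\,p^{e-1}j\pmod{p^{e}}$ with~$c$ a fixed unit modulo~$p$, and as~$j$ ranges over $0,\dots,p-1$ these exhaust the subgroup $U_e:=\{x\in(\ZZ/p^{e}\ZZ)^\times:x\equiv1\pmod{p^{e-1}}\}$.

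Everything thus reduces to the group-theoretic fact that $U_e\subseteq\big((\ZZ/p^{e}\ZZ)^\times\big)^2$ in precisely the ranges asserted. For odd~$p$, $(\ZZ/p^{e}\ZZ)^\times$ is cyclic of order $p^{e-1}(p-1)$, its squares are the unique subgroup of index~$2$, and $U_e$ is its unique subgroup of order~$p$, which lies in the squares as soon as $p\isdiv p^{e-1}(p-1)/2$, i.e. as soon as $e\ge2$. For $p=2$ and $e\ge3$ the squares in $(\ZZ/2^{e}\ZZ)^\times$ are exactly the residues $\equiv1\pmod8$, while $U_e=\{1,\,1+2^{e-1}\}$ and $1+2^{e-1}\equiv1\pmod8$ exactly when $e\ge4$; this is why the bound at~$2$ is~$3$ and not~$1$, and it is sharp ($H(2^9n+3\cdot2^6)\equiv0\pmod{11}$; for odd~$p$ the bound~$1$ is sharp by $H(3^3n+3^2)\equiv0\pmod5$). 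In either assumed case all classes $b+a'j\bmod a$ are admissible, so $H(a'n+b)\equiv0\pmod\ell$ for all~$n$, contradicting maximality; hence $\ord_p(a_0)\le1$ for odd~$p$ and $\ord_2(a_0)\le3$. The step I expect to be the real content, rather than routine, is the third paragraph: extracting from Theorem~\ref{mainthm:square-classes} that admissibility is governed exactly by the squares $\big((\ZZ/a\ZZ)^\times\big)^2$ (so that no larger supply of square classes is needed), and then doing the Chinese-remainder bookkeeping — above all when $p\isdiv g$ — so that the auxiliary~$u$ comes out a unit modulo the full modulus~$a$, not merely modulo~$a_0$.
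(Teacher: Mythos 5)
Your proposal is correct and takes essentially the same approach as the paper: both arguments deduce from Theorem~\ref{mainthm:square-classes} that the congruence descends to the larger progression $(a/p)\ZZ+b$ by showing every class $b+(a/p)j \pmod{a}$ has the form $b u^2 \pmod{a}$ with $\gcd(u,a)=1$, the paper via Hensel's lemma at $p$ together with the Chinese remainder theorem, and you via the equivalent fact that units $\equiv 1 \pmod{p^{e-1}}$ are squares modulo $p^{e}$ once $e\ge 2$ (resp.\ $e\ge 4$ for $p=2$). The only difference is presentational: you make the group-theoretic content of the Hensel step explicit, which the paper leaves as ``a Hensel's lemma type argument.''
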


As a further application of Theorem~\ref{mainthm:square-classes}, we provide a dichotomy between Ra\-ma\-nu\-jan-type congruences for Hurwitz class numbers and congruences for class numbers of imaginary quadratic fields whose discriminant varies in a square class modulo~$a$. For the next statement, we require the usual Legendre symbol and also the divisor sum~$\sigma_1(b) := \sum_{d \isdiv b} d$. For a prime~$p$ and an integer~$a$, we call the largest~$p$\nbd power that divides~$a$ its~$p$\nbd part.

\begin{maintheorem}
\label{mainthm:dichotomy}
Let $\ell>3$ be a prime. Suppose that we have a Ramanujan-type congruence modulo~$\ell$ for the Hurwitz class numbers on~$a \ZZ + b$. For all odd primes~$p \isdiv a$ assume that~$\ord_p\big( a \slash \gcd(a,b) \big) \ge 1$ and if~$a$ is even, assume that~$\ord_2\big( a \slash \gcd(a,b) \big) \ge 2$. Then either:
\begin{enumerateroman}
\item
\label{it:mainthm:dichotomy:fundamental-discriminants}
We have~$h(-D) \equiv 0 \,\pmod{\ell}$ for all fundamental discriminants~$-D < -4$ for which there is~$f \in \ZZ\setminus \{0\}$ with~$D f^2 \in a \ZZ + b$.

\item
\label{it:mainthm:dichotomy:hecke-eigenvalues}
There is a prime~$p$ dividing~$a$ such that 
\begin{gather*}
  \sigma_1(f_p)
\equiv
  \left(\frac{-D}{p}\right)
  \sigma_1(f_p \slash p)
  \;\pmod{\ell}
\end{gather*}
for every fundamental discriminant~$-D < 0$ and integer $f$ satisfying $D f^2 \equiv b \pmod{a}$, where $f_p$ is the $p$-part of $f$. Both~$(-D \slash p)$ and~$f_p$ are uniquely determined by~$a \ZZ + b$. In this case, we have a Ramanujan-type congruence for Hurwitz class numbers on~$a_p \ZZ + b$, where~$a_p$ is the $p$-part of~$a$. 
\end{enumerateroman}

Additionally, if the congruence in~\ref{it:mainthm:dichotomy:hecke-eigenvalues} holds for any~$Df^2 \in a \ZZ + b$, then it holds for all~$Df^2 \in a \ZZ + b$ for which~$-D f^2$ is a discriminant and we have a Ramanujan-type congruence modulo $\ell$ on $a\ZZ+b$.
\end{maintheorem}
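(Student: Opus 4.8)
The plan is to translate the hypothesis into a multiplicative statement about the arithmetic factor in the Hurwitz--Kronecker class number relation, and then to use Theorem~\ref{mainthm:square-classes} to vary the conductor at will.

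For a negative fundamental discriminant~$-D$ and~$f \in \ZZ_{\ge 1}$ one has $H(Df^2) = \tfrac{2}{w}\,h(-D)\,\lambda_{-D}(f)$, where $w \in \{2,4,6\}$ is the number of units (so $\tfrac{2}{w} \in \ZZ_{(\ell)}^\times$ as $\ell > 3$) and
\begin{gather*}
  \lambda_{-D}(f) \;=\; \sum_{d \isdiv f} \mu(d)\left(\frac{-D}{d}\right)\sigma_1(f/d)
\end{gather*}
is multiplicative in~$f$, with $\lambda_{-D}(p^e) = \sigma_1(p^e) - \big(\tfrac{-D}{p}\big)\sigma_1(p^{e-1})$, $\lambda_{-D}(1) = 1$, and the one-step recursion
\begin{gather*}
  \lambda_{-D}(p^{e+1}) - \lambda_{-D}(p^e) \;=\; p^e\Big(p - \big(\tfrac{-D}{p}\big)\Big)
  \qquad (e \ge 0)
\tx{.}
\end{gather*}
First I would record that, under the order hypotheses of the theorem, for every prime $p \isdiv a$ the $p$-part~$f_p$ of~$f$ and the symbol~$\big(\tfrac{-D}{p}\big)$ are determined by the residue of~$b$ modulo the $p$-part~$a_p$ of~$a$: the assumption $\ord_p(a/\gcd(a,b)) \ge 1$ (resp.\ $\ge 2$ for~$p = 2$) pins down $\ord_p(Df^2)$, hence $\ord_p f$ (since $\ord_p(-D) \le 1$ for odd~$p$ and $\le 3$ for~$p = 2$), and one further $p$-adic digit pins down~$\big(\tfrac{-D}{p}\big)$. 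Consequently $\lambda_{-D}(f_p)$, and hence $\Lambda_a := \prod_{p \isdiv a}\lambda_{-D}(f_p)$, depend only on~$a\ZZ + b$; this is the uniqueness asserted in~\ref{it:mainthm:dichotomy:hecke-eigenvalues}, and it gives the propagation of the congruence in the final sentence.

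For the dichotomy: if $\ell \isdiv h(-D)$ for every negative fundamental discriminant $-D < -4$ admitting an~$f$ with $Df^2 \in a\ZZ + b$, we are in case~\ref{it:mainthm:dichotomy:fundamental-discriminants}. Otherwise fix such a pair~$(-D_0, f_0)$ with $\ell \nmid h(-D_0)$. Since $H(an + b) \equiv 0 \pmod\ell$, Theorem~\ref{mainthm:square-classes} gives $H(an + bu^2) \equiv 0 \pmod\ell$ for all~$u$ coprime to~$a$, and as $D_0(f_0 u)^2 \in a\ZZ + bu^2$ this forces $\lambda_{-D_0}(f_0 u) \equiv 0 \pmod\ell$ for every such~$u$. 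Writing $f_0 = f_{0,a}\,g_0$ with $f_{0,a}$ supported on the primes of~$a$ and $g_0$ coprime to~$a$, multiplicativity gives $\lambda_{-D_0}(f_0 u) = \Lambda_a \cdot \lambda_{-D_0}(g_0 u)$. The goal is to show $\Lambda_a \equiv 0 \pmod\ell$: this says precisely that $\lambda_{-D}(f_p) \equiv 0 \pmod\ell$ for some $p \isdiv a$, i.e.\ the congruence of~\ref{it:mainthm:dichotomy:hecke-eigenvalues}; and substituting $\lambda_{-D}(f_p) \equiv 0$ into the factorization $H(N) = \tfrac{2}{w}h(-D)\,\lambda_{-D}(f_p)\prod_{q \ne p}\lambda_{-D}(f_q)$ for every~$N \equiv b \pmod{a_p}$ then yields the claimed Ramanujan-type congruence on~$a_p\ZZ + b$.

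So suppose instead $\Lambda_a \not\equiv 0 \pmod\ell$. Then $\lambda_{-D_0}(g_0 u) \equiv 0 \pmod\ell$ for all~$u$ coprime to~$a$; with~$u = 1$, some prime power exactly dividing~$g_0$, say~$q^e$ with~$q \nmid a$, satisfies $\lambda_{-D_0}(q^e) \equiv 0 \pmod\ell$. The elementary key point is that for $q \ne \ell$ one can always inflate the exponent past vanishing: if $\lambda_{-D_0}(q^{e'}) \equiv 0 \pmod\ell$ for all $e' \ge e$, the recursion forces $q \equiv \big(\tfrac{-D_0}{q}\big) \pmod\ell$, hence $\lambda_{-D_0}(q^{e'}) \equiv \lambda_{-D_0}(1) = 1 \pmod\ell$ for all~$e' \ge 0$, a contradiction. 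Thus for each prime~$q \isdiv g_0$ I can pick $e_q' \ge \ord_q g_0$ with $\ell \nmid \lambda_{-D_0}(q^{e_q'})$, assemble by the Chinese Remainder Theorem a~$u$ coprime to~$a$ realizing these exponents, and obtain $H(D_0(f_0 u)^2) = \tfrac{2}{w}h(-D_0)\,\Lambda_a\,\lambda_{-D_0}(g_0 u)$, a unit times $h(-D_0)\Lambda_a \not\equiv 0 \pmod\ell$, contradicting Theorem~\ref{mainthm:square-classes}. This argument runs verbatim unless $\ell \isdiv g_0$, i.e.\ $\ell \nmid a$ and $\ell \isdiv f_0$, where $\lambda_{-D_0}(\ell^k) \equiv 1 - \big(\tfrac{-D_0}{\ell}\big) \pmod\ell$ may vanish for every~$k \ge 1$; this is the main obstacle, and I would dispose of it at the outset. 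Since $\ell \nmid a$, applying Theorem~\ref{mainthm:square-classes} with $u \equiv \ell^{-\ord_\ell f_0} \pmod a$ yields a Ramanujan-type congruence on~$a\ZZ + b'$, $b' \equiv b\,\ell^{-2\ord_\ell f_0} \pmod a$; the pair $(-D_0,\, f_0\ell^{-\ord_\ell f_0})$ is then a pair as above for~$a\ZZ + b'$ with conductor coprime to~$\ell$, and $a\ZZ + b'$ carries the same associated data (the family of~$-D$'s, the symbols~$\big(\tfrac{-D}{p}\big)$, and~$\Lambda_a$) as~$a\ZZ + b$, so it suffices to prove the theorem for~$a\ZZ + b'$, where the obstacle is absent. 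The contradiction then forces $\Lambda_a \equiv 0 \pmod\ell$, putting us in case~\ref{it:mainthm:dichotomy:hecke-eigenvalues}; the additional claim follows from the determinacy above, which makes the relation of~\ref{it:mainthm:dichotomy:hecke-eigenvalues} independent of~$-D$ and~$f$, together with the implication $\lambda_{-D}(f_p) \equiv 0 \Rightarrow$ congruence on~$a_p\ZZ + b$ noted earlier. Everything apart from the prime~$\ell$ in the conductor is bookkeeping with the multiplicative function~$\lambda_{-D}$ and repeated appeals to Theorem~\ref{mainthm:square-classes}.
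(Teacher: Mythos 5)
Your architecture is sound and you arrive at the correct dichotomy, but your route to the key divisibility is genuinely different from the paper's and considerably longer. The paper moves the conductor in the opposite direction: writing $f = f_a u$ with $f_a$ supported on the primes of~$a$ and $u$ coprime to~$a$, it applies Theorem~\ref{mainthm:square-classes} to the progression $a\ZZ + \ov{u}^2 b$, which contains $D f_a^2$, and so reads off $H(D f_a^2) \equiv 0 \pmod{\ell}$ at once; the Hurwitz class number formula then writes $H(D f_a^2)$ as a unit times $H(D) \prod_{p \isdiv f_a} \big(\sigma_1(f_p) - (\tfrac{-D}{p})\sigma_1(f_p \slash p)\big)$, and since $H(D) \not\equiv 0 \pmod{\ell}$ one local factor at a prime $p \isdiv \gcd(f,a)$ must vanish --- no prime outside~$a$ ever enters. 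You instead keep the full conductor and multiply it up by auxiliary~$u$, which obliges you to prove the inflation lemma (for $q \ne \ell$ the values $\lambda_{-D_0}(q^{e'})$ cannot vanish for all $e' \ge e$, since the recursion would force $q \equiv (\tfrac{-D_0}{q}) \pmod{\ell}$ and hence $\lambda_{-D_0}(q^{e'}) \equiv 1$) and to make a separate preliminary reduction when $\ell \isdiv f_0$ but $\ell \nisdiv a$. Both pieces of extra machinery check out, but they are avoidable: dividing out the coprime-to-$a$ part of the conductor is shorter than neutralizing it multiplicatively.

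The one genuine gap is the determinacy of~$f_2$. Your justification --- that the hypothesis pins down $\ord_2(Df^2)$ and hence $\ord_2 f$ ``since $\ord_2(D) \le 3$'' --- does not suffice: $\ord_2(Df^2) = 4$, say, is compatible with both $(\ord_2 D, \ord_2 f) = (0,2)$ and $(2,1)$, so when $\ord_2 b$ is even its order alone does not determine $\ord_2 f$. Excluding this ambiguity is exactly where the hypothesis $\ord_2\big(a \slash \gcd(a,b)\big) \ge 2$ is consumed: one knows $b \slash 2^{\ord_2 b}$ modulo~$4$, and the two competing factorizations force $D \equiv 3 \pmod{4}$ (odd fundamental $-D$) respectively $D' \slash 4 \equiv 1 \pmod{4}$ (fundamental $-D'$ with $\ord_2 D' = 2$), which give different residues of $b \slash 2^{\ord_2 b}$ modulo~$4$ and hence cannot coexist. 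This case analysis occupies roughly half of the paper's proof and is needed both for the well-definedness claims in case~\ref{it:mainthm:dichotomy:hecke-eigenvalues} and for the congruence on $a_2 \ZZ + b$; your write-up must include it. The analogous statement for odd~$p$ is immediate from the parity of $\ord_p b$, as you indicate.
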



\begin{mainremarkenumerate}
\item
\label{mainrm:mainthm:dichotomy:assumptions_ord}
The assumptions on the orders of~$a \slash \gcd(a,b)$ can always be achieved by replacing~$a$ with a suitable multiple of it. They can be removed at the expense of a more technical statement involving the factorizations~$D f^2 \in a \ZZ + b$ that appear in case~\ref{it:mainthm:dichotomy:fundamental-discriminants}.

\item
From the Hurwitz class number formula alone, one could deduce a statement similar to case~\ref{it:mainthm:dichotomy:hecke-eigenvalues} for some prime~$p$, not necessarily dividing~$a$. To show that one must have~$p \isdiv a$, we use Theorem \ref{mainthm:square-classes}. 

\item
One can verify that all congruences given in this introduction fall under case~\ref{it:mainthm:dichotomy:hecke-eigenvalues}. We do not expect that the first case in the theorem ever occurs, i.e., we expect that Ramanujan-type congruences for~$H(D)$ modulo~$\ell$ on~$a \ZZ + b$ occur if and only if there is~$p \isdiv a$ and~$Df^2 \in a \ZZ + b$ satisfying the above condition in~\ref{it:mainthm:dichotomy:hecke-eigenvalues}. This belief is supported by extensive numerical evidence in addition to well-known theorems on the divisibility of class numbers, such as \cite{Wiles}, which implies that if the first case occurs, we must have~$p \equiv \pm 1 \,\pmod{\ell}$ for any odd primes $p$ with~$2 \nisdiv \ord_p(a)$. 

\item For any congruences for Hurwitz class numbers that do not fall under case~\ref{it:mainthm:dichotomy:hecke-eigenvalues}, Theorem~\ref{mainthm:dichotomy} implies the existence of fundamental discriminants~$-D \equiv u^2 b \pmod{a}$ for which~$h(-D) \equiv 0 \,\pmod{\ell}$ for some~$u$ co-prime to~$a$.
\end{mainremarkenumerate}

The proof of Theorem~\ref{mainthm:dichotomy} relies on the Hurwitz class number formula and Theorem~\ref{mainthm:square-classes}.  In accordance with Theorem~\ref{mainthm:square-classes}, the case of holomorphic Ramanujan-type congruences is accessible via methods from the classical theory of modular forms, while the non-holomorphic case is not.

The paper is organized as follows.  In Section~\ref{sec:preliminaries}, we review some tools from the theory of modular forms needed for our work.  In Section~\ref{sec:non-holomorphic-congruences}, we establish Theorem~\ref{mainthm:nonholomorphic-ell-divides-b}.  In Section~\ref{sec:square-classes} we prove Theorem~\ref{mainthm:square-classes}.  Finally,  in Section~\ref{sec:applications} we settle Theorems~\ref{mainthm:a-b-ppowers} and~\ref{mainthm:dichotomy}.


\section{Preliminaries}%
\label{sec:preliminaries}

We introduce necessary notation to discuss modular forms (see for example \cite{bringmann-folsom-ono-rolen-2018}) and quasi-modular forms (see for example \cite{zagier-1994,kaneko-zagier-1995}).  For odd~$D$, set
\begin{gather*}
  \epsilon_D
=
  \begin{cases}
  1\tx{,} & \tx{if $D \equiv 1 \,\pmod{4}$;} \\
  i\tx{,} & \tx{if $D \equiv 3 \,\pmod{4}$.}
  \end{cases}
\end{gather*}

Throughout the paper $\tau\in\HS$ (the usual complex upper half plane), $y=\Im(\tau)$, and $e(s\tau) := \exp(2 \pi i\, s\tau)$ for $s\in\QQ$. Let~$\Ga_0(N)$, $\Ga_1(N)$, and $\Ga(N)$ be the standard congruence subgroups of~$\SL{2}(\ZZ)$. Let $\rmM_k(\Ga)$ denote the space of modular forms of integral or half-integral weight~$k$ for~$\Ga \subseteq \SL{2}(\ZZ)$ with respect to the multiplier $\nu_{\theta}^{2k}$ if $k \not\in \mathbb{Z}$ (where $\nu_{\theta}$ is the theta multiplier), and $\bbM_k(\Ga)$ the corresponding space of harmonic Maass forms (satisfying the moderate growth condition at all cusps).

For $\ga = \begin{psmatrix} a & b \\ c & d \end{psmatrix} \in \GL{2}(\QQ)$ with~$\det(\ga) > 0$, the weight-$k$ slash operator is defined by
\begin{gather*}
  \big( f \big|_k \ga \big) (\tau)
\;=\;
  (\det \ga)^{-\frac{k}{2}}\,
  (c \tau + d)^{-k}\,
  f \big(\mfrac{a \tau + b}{c \tau + d}\big)
\tx{.}
\end{gather*}

Recall that if~$f(\tau)=\sum_{m\geq 0}c(f,m)e(m\tau) \in \rmM_{2-k}(\Ga(N))$ is a holomorphic modular form of level~$N \in \ZZ_{\ge 1}$ with~$k \ne 1$, then its non-holomorphic Eichler integral is given by
\begin{gather}
\label{eq:def:non-holomorphic-eichler-integral}
\begin{aligned}
  f^\ast(\tau)
\;&:=\;
  -(2 i)^{k-1}
  \int_{-\ov{\tau}}^{i\infty} \frac{\ov{f(-\ov{w})}}{(w + \tau)^k}\,d\!w
\\
&\hphantom{:}=
  \frac{\ov{c(f,0)}}{1-k}\, y^{1-k}
  \,-\,
  (4 \pi)^{k-1}
  \sum_{m < 0}
  \ov{c(f, |m|)\, }|m|^{k-1}
  \Gamma(1-k,4 \pi |m| y) e(m\tau)
\tx{,}
\end{aligned}
\end{gather}
where $\Ga$ represents the upper incomplete Gamma-function.

\subsection{Generating series of Hurwitz class numbers}

Zagier~\cite{zagier-1975} investigated the generating series~$\sum_D H(D) e(D \tau)$ of Hurwitz class numbers, and proved that it has a modular completion:
\begin{gather}
\label{eq:zagier-eisenstein-series}
  E_{\frac{3}{2}}(\tau)
\;:=\;
  \sum_{D = 0}^\infty H(D) e(D \tau)
\,+\,
  \mfrac{1}{16 \pi} \theta^\ast(\tau)
\;\in\;
  \bbM_{\frac{3}{2}}(\Gamma_0(4))
\tx{,}
\end{gather}
where
\begin{gather}
\label{eq:theta-modularity}
\begin{aligned}
  \theta
&\;:=\;
  \theta_{1,0}
\,\in\,
  \rmM_{\frac{1}{2}}(\Ga_0(4))
\tx{\, with}
\\
  \theta_{a,b}(\tau)
&\;:=\;
  \hspace{-1em}
  \sum_{\substack{n \in \ZZ\\n \equiv b \,\pmod{a}}}
  e\big( \tfrac{n^2 \tau}{a} \big)
\,\in\,
  \rmM_{\frac{1}{2}}(\Ga(4 a))
\tx{,}\quad
  a \in \ZZ_{\ge 1}, b \in \ZZ
\tx{.}
\end{aligned}
\end{gather}

For $a \in \ZZ_{\ge 1}$ and $b \in \ZZ$, we recall the operators~$\rmU_{a,b}$ from our earlier work~\cite{beckwith-raum-richter-2020}, which act on Fourier series expansions of non-holomorphic modular forms by:
\begin{gather}
  \rmU_{a,b}\,\sum_{n \in \ZZ} c(f;\,n;\,y) e(n \tau)
\;:=\;
  \sum_{\substack{n \in \ZZ\\n \equiv b \,\pmod{a}}}
  c\big( f;\,n;\,\tfrac{y}{a} \big)
  e\big( \tfrac{n \tau}{a} \big)
\tx{.}
\end{gather}

In particular, the holomorphic part of $\rmU_{a,b}\, E_{\frac{3}{2}}(\tau)$ is the generating series of Hurwitz class numbers~$H(a n + b)$ for~$n \in \ZZ$, and one finds that (see also~\cite{cohen-1975,jochnowitz-2004-preprint} for the holomorphic case)
\begin{gather}
\label{eq:eisenstein-ab-modularity}
  \rmU_{a,b}\, E_{\frac{3}{2}}
\,\in\,
  \bbM_{\frac{3}{2}}(\Ga(4 a))
\tx{.}
\end{gather}

The action of the~$\rmU$\nbd operators on theta series can be described by
\begin{gather}
\label{eq:Uab-theta}
  \rmU_{a,b}\,\theta
\;=\;
  \sum_{\beta^2 \equiv b \,\pmod{a}}
  \theta_{a,\beta}
\quad\tx{and}\quad
  \rmU_{a,b}\,\theta^\ast
\;=\;
  \sum_{\beta^2 \equiv -b \,\pmod{a}}
  \sqrt{a}\, \theta^\ast_{a, \beta}
\tx{.}
\end{gather}
Note that if~$-b$ is not a square modulo~$a$, then $\rmU_{a,b}\,E_{\frac{3}{2}}$ is a holomorphic modular form.

\subsection{Holomorphic projection}

Holomorphic projection plays an important role in our proofs of Theorems~\ref{mainthm:nonholomorphic-ell-divides-b} and~\ref{mainthm:square-classes}.  We briefly review the holomorphic projection operator from~\cite{imamoglu-raum-richter-2014} in the scalar-valued case (see also~\cite{sturm-1980,gross-zagier-1986}).  Let~$k \in \ZZ$, $k \ge 2$, $N \in \ZZ_{\ge 1}$, and $f :\,\HS \ra \CC$ an~$N$-periodic continuous function with Fourier series expansion
\begin{gather*}
  f(\tau)
\;=\;
  \sum_{n \in \frac{1}{N}\ZZ}
  c(f;\,n;\,y) e(\tau n)
\end{gather*}
satisfying the conditions:
\begin{enumerateroman*}
\item
For some $a > 0$ and all~$\ga \in \SL{2}(\ZZ)$, there are coefficients $\td{c}(f|_k\,\ga;\,0) \in \CC$, such that $(f |_k\,\ga)(\tau) = \td{c}(f|_k\,\ga;\,0) + \cO(y^{-a})$ as $y \ra \infty$;

\item
For all $n \in \frac{1}{N}\ZZ_{> 0}$, we have $c(f;\, n;\, y) = \cO(y^{2-k})$ as $y \ra 0$.
\end{enumerateroman*}
Then
\begin{gather}
\label{eq:def:holomorphic-projection}
\begin{aligned}
  \pi^\hol_k(f)
\;&:=\;
  \td{c}(f;\,0)
  \,+\,
  \sum_{n \in \frac{1}{N}\ZZ_{>0}}
  c\big(\pi^\hol_k(f);\, n\big)\, e(n\tau)
\quad\tx{with}
\\
  c\big(\pi^\hol_k(f);\, n\big)
\;&:=\;
  \frac{(4 \pi n)^{k-1}}{\Gamma(k-1)}\,
  \lim_{s \ra 0}\,
  \int_0^\infty
  c(f;\, n;\, y) \exp(-4 \pi n y) y^{s+k-2}\,\rmd\!y
\tx{.}
\end{aligned}
\end{gather}

Recall the following key properties of the holomorphic projection operator in~\eqref{eq:def:holomorphic-projection}:  Proposition~4 of~\cite{imamoglu-raum-richter-2014} states that if $f$ is holomorphic, then $\pi^\hol_k(f) = f$. Theorem~5 of~\cite{imamoglu-raum-richter-2014} asserts that if $f$ transforms like a modular form of weight~$2$ for the group~$\Ga_1(N)$, then~$\pi^\hol_2(f)$ is a quasi-modular form of weight~$2$ for $\Ga_1(N)$. 

\subsection{A theorem of Serre}

We conclude this Section with a result of Serre, which is required for our proof of Theorem~\ref{mainthm:nonholomorphic-ell-divides-b}.

\begin{theorem}[Serre \cite{serre-1973}, \cite{serre-1974}]
\label{thm:serre}
Fix positive integers~$k$ and~$N$, and a prime number~$\ell$. Then there exist infinitely many primes~$p \equiv 1 \pmod{\ell N}$ such that for all $f \in M_k(\Gamma_1(N))$ with $\ell$-integral Fourier coefficients, we have
\begin{gather*}
  c( f; np^r ) \equiv (r+1)\, c(f;n) \,\pmod{\ell}
\end{gather*}
for all $n \in \ZZ$ coprime to $\ell$ and all non-negative integers~$r$.
\end{theorem}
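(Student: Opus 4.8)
The plan is to run the Chebotarev density argument underlying Serre's results on congruences between modular forms. First I would fix an embedding $\ov\QQ \hra \ov\QQ_\ell$, let $M$ be the module of $\ell$\nbd integral forms in $M_k(\Gamma_1(N))$, and let $\mathbf T$ be the subalgebra of $\mathrm{End}(M)$ generated by the Hecke operators $T_p$ with $p \nisdiv N\ell$ together with the diamond operators $\langle d \rangle$. By the $q$\nbd expansion principle $M$ is finitely generated over the relevant ring of $\ell$\nbd integers, so $\mathbf T$ is module-finite and $\ov{\mathbf T} := \mathbf T / \ell \mathbf T$ is a finite ring. I would then reduce the theorem to the assertion that \emph{there are infinitely many primes $p \equiv 1 \pmod{N\ell}$ with $T_p \equiv 2 \cdot \id$ in $\ov{\mathbf T}$}. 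Indeed, such a $p$ also satisfies $\langle p \rangle\, p^{k-1} \equiv 1 \pmod \ell$, so the Hecke recursion $T_{p^{r+1}} = T_p\, T_{p^r} - \langle p \rangle p^{k-1}\, T_{p^{r-1}}$, which holds in $\mathbf T$, combined with $T_{p^0} = \id$ and $T_{p^1} = T_p \equiv 2$, yields $T_{p^r} \equiv (r+1) \cdot \id \pmod \ell$ by an immediate induction; since moreover $c(T_{p^r} f;\, n) = c(f;\, p^r n)$ whenever $p \nisdiv n$ (the lowering terms of the Hecke action vanish, as one checks by induction on $r$), this gives $c(f;\, n p^r) \equiv (r+1)\, c(f;\, n) \pmod \ell$ for every $f \in M$ and every $n$ prime to $p$.

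To produce these primes I would invoke the existence of a continuous two\nbd dimensional Galois pseudo-representation $(\mathrm{Tr}, \det) :\, \mathrm{Gal}(\ov\QQ / \QQ) \ra \mathbf T$, unramified outside $N\ell$, normalised so that $\mathrm{Tr}(\mathrm{Frob}_p) = T_p$ and $\det(\mathrm{Frob}_p) = \langle p \rangle p^{k-1}$ for every $p \nisdiv N\ell$; on the cuspidal part of $M_k(\Gamma_1(N))$ this is assembled from Deligne's Galois representations attached to the relevant newforms, and on the Eisenstein part it is a sum of Dirichlet characters. Reduced modulo $\ell$ it becomes a function on $\mathrm{Gal}(\ov\QQ / \QQ)$ with values in the finite discrete ring $\ov{\mathbf T}$, hence factors through $\mathrm{Gal}(L / \QQ)$ for some finite Galois extension $L \supseteq \QQ(\zeta_{N\ell})$. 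By Chebotarev's theorem there are then infinitely many primes $p$ splitting completely in $L$; for each of them $\mathrm{Frob}_p$ is trivial in $\mathrm{Gal}(L / \QQ)$, so on one hand $p$ splits completely in $\QQ(\zeta_{N\ell})$, forcing $p \equiv 1 \pmod{N\ell}$, and on the other hand $T_p \equiv \mathrm{Tr}(\mathrm{Frob}_p) = \mathrm{Tr}(1) = 2$ in $\ov{\mathbf T}$, because a $2$\nbd dimensional pseudo-representation sends the identity to $2$. These are exactly the primes required in the first paragraph.

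I expect the genuine difficulty to be the input used in the second paragraph. What is cheap is the pointwise congruence $a_p(g) \equiv 2 \pmod \ell$ on each normalised eigenform $g$ contributing to $M_k(\Gamma_1(N))$, which follows at once from $\tr\, \ov{\rho}_g(\mathrm{Frob}_p) = 2$; upgrading these to the single operator congruence $T_p \equiv 2$ on all of $\mathbf T$ — which is what lets one treat $\ell$\nbd integral forms that are not $\ell$\nbd integral combinations of eigenforms, i.e.\ the situation of a congruence between eigenforms — is exactly the role played by the Hecke-algebra-valued pseudo-representation. One could instead simply cite Serre's theorem, since it is the statement being reproved.
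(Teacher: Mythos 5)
The paper offers no proof of this statement: it is imported verbatim from Serre with a citation, so there is nothing internal to compare your argument against. Your sketch is, in substance, the standard proof of the cited result, recast in the modern language of Hecke-algebra-valued pseudo-representations, and it is correct at the level of detail appropriate for reproving a classical theorem. The reduction to the single congruence $T_p \equiv 2$ in $\mathbf{T}/\ell\mathbf{T}$ is right: for $p \equiv 1 \pmod{N\ell}$ one has $\langle p\rangle p^{k-1} \equiv 1 \pmod{\ell}$, the recursion $T_{p^{r+1}} = T_p T_{p^r} - \langle p\rangle p^{k-1} T_{p^{r-1}}$ gives $T_{p^r} \equiv r+1$, and $c(T_{p^r}f;\,n) = c(f;\,np^r)$ for $p \nisdiv n$. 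The Chebotarev step applied to the finite extension cut out by the reduction mod~$\ell$ of the trace function (after discarding the finitely many ramified primes, all of which divide $N\ell$ and are excluded by the splitting condition anyway) produces infinitely many admissible $p$, and you correctly isolate the essential point that the congruence must hold in the Hecke algebra itself so that it applies to $\ell$-integral forms that are not $\ell$-integral combinations of eigenforms. One caveat: your argument establishes the congruence for $n$ coprime to $p$, whereas the statement as printed requires $n$ coprime to $\ell$. The former is the correct hypothesis --- if $p^s$ exactly divides $n$ with $s \ge 1$, the two sides differ by $rs\, c(f;\,n/p^s)$ modulo $\ell$ --- and it is also the hypothesis actually used in Section~\ref{sec:non-holomorphic-congruences}, where the auxiliary prime supplied by Theorem~\ref{thm:serre} is always taken large and congruent to $1$ modulo $a$, hence coprime to the relevant index. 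So your proposal proves the (corrected) version of the theorem that the paper in fact needs.
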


\section{Conditions on non-holomorphic congruences}%
\label{sec:non-holomorphic-congruences}

We have already proved in~\cite{beckwith-raum-richter-2020} that non-holomorphic congruences modulo~$\ell$ for Hurwitz class numbers on an arithmetic progression~$a \ZZ + b$ include the divisibility~$\ell \isdiv a$. For the purpose of this work, we need to extend this result.

We will prove Theorem~\ref{mainthm:nonholomorphic-ell-divides-b} by contraposition. Proposition~\ref{prop:coefficient-formula} provides us with explicit congruences for specific Fourier series condition, which we then use to derive a contradiction. The next lemma allows us to pass from a given arithmetic progression~$\td{a} \ZZ + \td{b}$ to a more convenient one.
\begin{lemma}
\label{la:subprogression}
Let~$\td{a} \in \ZZ_{\ge 1}$ and~$\td{b},\beta \in \ZZ$ be such that~$-\td{b} \equiv =\beta^2 \pmod{\td{a}}$. Then there exist~$a \in \ZZ_{\ge 1}$ and $b \in \ZZ$ such that 
\begin{enumerateroman}
\item
\label{it:la:subprogression:first-condition}
\label{it:la:subprogression:a-divides-and-b-congruent}
We have~$\td{a} \isdiv a$ and~$b \equiv \td{b} \,\pmod{\td{a}}$.

\item
\label{it:la:subprogression:-b-is-square}
We have ~$-b \equiv \beta^2 \,\pmod{a}$.

\item
\label{it:la:subprogression:proper-local-divisors}
For every prime~$p \isdiv a$, writing~$a_p$ for the~$p$\nbd part of~$a$, we have that~$\gcd(a_p, 2 \beta)$ is a proper divisor of~$a_p$.

\item
\label{it:la:subprogression:last-condition}
\label{it:la:subprogression:large-prime-divides-a}
There is a prime~$p \isdiv a$ such that~$a < p^2$ and~$0 \le 2 \beta < p$ .
\end{enumerateroman}
\end{lemma}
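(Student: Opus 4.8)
The plan is to build $a$ from $\td a$ by adjusting each local factor $a_p$ separately and then glueing via the Chinese Remainder Theorem. Since the conditions~\ref{it:la:subprogression:first-condition}--\ref{it:la:subprogression:last-condition} are all local at primes except for~\ref{it:la:subprogression:last-condition}, which only needs to hold at one prime, I would first treat a single prime and then combine. Fix a prime $p$; let $\td a_p = p^{e}$ be the $p$-part of $\td a$ and let $\beta_p$ be the residue of $\beta$ modulo a sufficiently large power of $p$. I want to choose the $p$-part $a_p = p^{f}$ of $a$ with $f \ge e$ large enough that (i) $-\td b \equiv \beta^2$ lifts to a congruence modulo $p^{f}$ — which is automatic once we only demand $b \equiv \td b \pmod{\td a}$ and define $b$ by CRT to satisfy $-b \equiv \beta^2 \pmod{a}$, so there is nothing to check here beyond consistency — and (ii) $\gcd(p^f, 2\beta)$ is a proper divisor of $p^f$. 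Condition (ii) says $\ord_p(2\beta) < f$; since $\ord_p(2\beta)$ is a fixed finite number (note $\beta$ might be $0$, in which case we must instead replace $\beta$ by a nonzero representative of its class modulo the eventual $a$ — this is the one subtlety, see below), we just take $f > \ord_p(2\beta)$. So for each prime $p \isdiv \td a$ we enlarge the exponent; for primes $p \nmid \td a$ we are free to introduce a new factor if needed for~\ref{it:la:subprogression:last-condition}.

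For condition~\ref{it:la:subprogression:last-condition} I would introduce (or enlarge) a single prime power. Pick a prime $p$ with $p > 2\beta$ and $p \nmid \td a$ (infinitely many exist, or if $\beta=0$ just take $p$ to be any prime larger than whatever bound we fix, again after replacing $\beta$ by a suitable nonzero lift); then $0 \le 2\beta < p$ holds. I want $a < p^2$, i.e. the total modulus should be smaller than $p^2$ while still being divisible by all the enlarged local factors and by $p$ itself. The trick is to choose $p$ \emph{last} and \emph{huge}: once the contributions from all primes dividing $\td a$ are fixed, call their product $a_0$, choose a prime $p > \max(2\beta,\, a_0)$ coprime to $a_0$, and set $a = a_0 \cdot p$. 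Then $a = a_0 p < p \cdot p = p^2$, and $p \isdiv a$ with $0 \le 2\beta < p$, so~\ref{it:la:subprogression:last-condition} holds. Conditions~\ref{it:la:subprogression:first-condition} and~\ref{it:la:subprogression:proper-local-divisors} hold by construction ($\td a \isdiv a_0 \isdiv a$; at $p$ we have $a_p = p$ and $\gcd(p, 2\beta)=1$ is a proper divisor since $p>1$). Finally define $b$ by the Chinese Remainder Theorem so that $b \equiv \td b \pmod{\td a}$ and $-b \equiv \beta^2 \pmod{a}$; these are compatible because $-\td b \equiv \beta^2 \pmod{\td a}$ and $\td a \isdiv a$, which gives~\ref{it:la:subprogression:-b-is-square} and the congruence part of~\ref{it:la:subprogression:a-divides-and-b-congruent}.

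The main obstacle is the case $\beta \equiv 0$, or more generally $\ord_p(\beta)$ close to $\ord_p(\td a)$, which threatens condition~\ref{it:la:subprogression:proper-local-divisors}: if $\beta = 0$ then $\gcd(a_p, 2\beta) = a_p$ is never proper. The resolution is that $\beta$ is only determined modulo $\td a$, and a fortiori we are free to replace it by any integer in its residue class modulo $a$; so at the outset I would replace $\beta$ by $\beta + m\,\td a$ for a suitable $m$ to make $\ord_p(2\beta)$ small (in fact bounded, e.g. $\ord_p(2\beta) \le \ord_p(2) + \ord_p(\td a)$ after a generic shift) at every prime $p \isdiv \td a$ simultaneously, again by CRT — pick $\beta'$ with $\beta' \equiv \beta \pmod{\td a}$ and $\beta' \not\equiv 0 \pmod{p^{\,\ord_p(\td a)+\ord_p(2)+1}}$ for each such $p$. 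With this $\beta'$ in hand the exponent bookkeeping in the previous paragraph goes through verbatim. A secondary routine point is checking that $-b \equiv (\beta')^2 \pmod a$ continues to record the same square class that we started with, i.e. that replacing $\beta$ by $\beta'$ does not break~\ref{it:la:subprogression:-b-is-square}; but since $\beta' \equiv \beta \pmod{\td a}$ and we \emph{define} $b$ afterwards to satisfy $-b \equiv (\beta')^2 \pmod a$, this is immediate. Everything else is a routine application of the Chinese Remainder Theorem and elementary $p$-adic valuation estimates.
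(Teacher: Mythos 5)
Your construction is essentially the paper's own: enlarge the exponent at each prime $p \isdiv \td{a}$ past $\ord_p(2\beta)$ to form $a_0$, then multiply by a single new prime $p > \max\{a_0, 2\beta\}$ so that $a = a_0 p < p^2$ and $\gcd(p,2\beta)=1$, and finally take $b \equiv -\beta^2 \pmod{a}$, which is automatically compatible with $b \equiv \td{b} \pmod{\td{a}}$. The only difference is your explicit handling of the degenerate case $\beta = 0$ (where one must first replace $\beta$ by a nonzero representative of its class for conditions~\ref{it:la:subprogression:proper-local-divisors} and~\ref{it:la:subprogression:large-prime-divides-a} to be satisfiable at all), a point the paper's proof passes over in silence.
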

\begin{proof}
First we choose an appropriate multiple of $\td{a}$:
\begin{gather*}
  a'
:=
  \prod_{p\isdiv \td{a}} p^{\max \{\ord_p(\td{a)},\, \ord_p (2 \beta) + 1\}}
=
  \mathrm{lcm}(\td{a},\, 2 \beta a_{\rms\rmf} )
\tx{,}
\end{gather*}
where~$a_{\rms\rmf}$ is the maximal square-free divisor of~$a$. We let $p > \max \{ a', 2\beta \}$, and set $a:= a' \cdot p$. Then if $b$ is any integer congruent to $- \beta^2$ modulo $a$, the four requirements in the lemma are met. 
\end{proof}

\begin{proposition}
\label{prop:coefficient-formula}
Assume that~$H(an+b) \equiv 0 \pmod{\ell}$ for all $n$. Further, assume that $\beta^2 \equiv -b \,\pmod{a}$ and~$b \not\equiv 0 \,\pmod{\ell}$. Assume that~$a$, $b$, and~$\beta$ satisfy Conditions~\ref{it:la:subprogression:first-condition}--\ref{it:la:subprogression:last-condition} in Lemma~\ref{la:subprogression}. Set~$a' = \gcd(a, 2 \beta)$.

Then~$\pi^\hol_2\big((  \rmU_{a,b}\,E_{\frac{3}{2}}) \cdot  (\theta_{a,\beta} + \theta_{a,-\beta}) \big)$ is a quasi-modular form for~$\Ga_1(4 a)$ and
\begin{gather*}
  \pi^\hol_2\big((
  \rmU_{a,b}\,E_{\frac{3}{2}})
  \cdot
  (\theta_{a,\beta} + \theta_{a,-\beta})
  \big)
\;=\;
  \sum_{n = 0}^{\infty} c(n) e(n \tau)
\tx{,}
\end{gather*}
where 
\begin{gather*}
  c(a' p)
\;\equiv\;
  0
  \;\pmod{\ell}
\tx{,}\quad
  c(a' p p')
\;\equiv\;
  - a'
  \;\pmod{\ell}
\quad
  \tx{ if\/ $2 \beta \not\equiv a' \,\pmod{a}$}
\end{gather*}
for any primes~$p, p'$ with
\begin{gather}
\label{eq:eisenstein-theta-holomorphic-projection-coefficient-prime-condition:p}
  a' p \equiv 2 \beta \,\pmod{a} 
\tx{,}\;
  p > a \slash a'
\tx{,}
\quad
  p' \equiv 1 \,\pmod{a}
\tx{,}\;
  p' > p a \slash a'
\tx{;}
\end{gather}

And 
\begin{gather*}
  c(a')
\;\equiv\;
  c(a' p')
\;\equiv\;
  - a'
  \;\pmod{\ell}
\quad
  \tx{ if\/ $2 \beta \equiv a' \,\pmod{a}$}
\end{gather*}
for any prime~$p'$ with
\begin{gather}
\label{eq:eisenstein-theta-holomorphic-projection-coefficient-prime-condition:no-p}
  p' \equiv 1 \,\pmod{a}
\tx{,}\;
  p' > a \slash a'
\tx{.}
\end{gather}
\end{proposition}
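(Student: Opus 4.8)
The plan is to feed $F := \bigl(\rmU_{a,b}\,E_{\frac{3}{2}}\bigr)\cdot\bigl(\theta_{a,\beta} + \theta_{a,-\beta}\bigr)$ into the holomorphic projection operator. First I would record that $F$ transforms like a modular form of weight~$2$ for $\Ga_1(4a)$ and has moderate growth: this follows from~\eqref{eq:eisenstein-ab-modularity} and~\eqref{eq:theta-modularity}, from the triviality of the fourth power of the theta multiplier, and from a direct computation with the theta transformation formulas (using Condition~\ref{it:la:subprogression:proper-local-divisors} to know $2\beta \not\equiv 0 \pmod{a}$, so that $\theta_{a,\beta} + \theta_{a,-\beta}$ is a genuine two\nbd term sum). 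Theorem~5 of~\cite{imamoglu-raum-richter-2014} then gives that $\pi^\hol_2(F)$ is a quasi\nbd modular form of weight~$2$ for $\Ga_1(4a)$, which is the first assertion; the remaining work is to compute the Fourier coefficients. To that end I would split $\rmU_{a,b}\,E_{\frac{3}{2}} = \frakH + \frakE$ into its holomorphic part $\frakH = \sum_{m} H(am+b)\,e\bigl(\tfrac{(am+b)\tau}{a}\bigr)$ and its non\nbd holomorphic part $\frakE = \tfrac{\sqrt{a}}{16\pi}\sum_{\gamma^2 \equiv -b \,\pmod{a}}\theta^\ast_{a,\gamma}$, written out via~\eqref{eq:Uab-theta} and~\eqref{eq:def:non-holomorphic-eichler-integral}. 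Since the defining formula~\eqref{eq:def:holomorphic-projection} is linear in~$f$, $\pi^\hol_2(F) = \pi^\hol_2(\frakH\,\Theta) + \pi^\hol_2(\frakE\,\Theta)$ with $\Theta := \theta_{a,\beta} + \theta_{a,-\beta}$.

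The holomorphic term contributes nothing modulo~$\ell$. Indeed $\frakH\,\Theta$ is holomorphic, so $\pi^\hol_2(\frakH\,\Theta) = \frakH\,\Theta$ (the formula reproduces holomorphic Fourier series; cf.\ Proposition~4 of~\cite{imamoglu-raum-richter-2014}), and its coefficient of $e(n\tau)$ equals $\sum_{s \equiv \pm\beta \,\pmod{a}} H(an - s^2)$. Each argument $an - s^2$ is congruent to~$b$ modulo~$a$. Moreover the hypothesis forces $a \nisdiv b$: otherwise $0 \in a\ZZ + b$, and $H(0) = -\tfrac{1}{12} \not\equiv 0 \pmod{\ell}$ (as $\ell > 3$) would contradict $H(an+b) \equiv 0 \pmod{\ell}$. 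Hence no argument $an - s^2$ vanishes, so each $H(an - s^2) \equiv 0 \pmod{\ell}$, and $\pi^\hol_2(\frakH\,\Theta) \equiv 0 \pmod{\ell}$ coefficientwise. (The same fact $a \nisdiv b$ removes the $y^{1-k}$\nbd term of $\frakE$ in~\eqref{eq:def:non-holomorphic-eichler-integral}, since the constant Fourier coefficient of $\theta_{a,\gamma}$ is nonzero only if $a \isdiv \gamma$, which would force $a \isdiv b$.)

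The heart of the proof is $\pi^\hol_2(\frakE\,\Theta)$. By~\eqref{eq:def:non-holomorphic-eichler-integral}, $\frakE$ is a constant times $\sum_{t} t\,\Gamma\bigl(-\tfrac12, \tfrac{4\pi t^2 y}{a}\bigr)\, e\bigl(-\tfrac{t^2\tau}{a}\bigr)$ over $t \in \ZZ_{\ge 1}$ with $t^2 \equiv -b \pmod{a}$; multiplying by $\Theta = \sum_{s \equiv \pm\beta \,\pmod{a}} e\bigl(\tfrac{s^2\tau}{a}\bigr)$ and taking the coefficient of $e(n\tau)$ leaves, for each~$n$, a finite sum over pairs $(s,t)$ with $s \equiv \pm\beta \pmod{a}$, $t^2 \equiv -b \pmod{a}$, $s^2 - t^2 = an$, of a term proportional to $t\,\Gamma\bigl(-\tfrac12, \tfrac{4\pi t^2 y}{a}\bigr)$. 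Plugging into~\eqref{eq:def:holomorphic-projection} with $k = 2$, the $y$\nbd integral is the Frullani\nbd type evaluation $\int_0^\infty \Gamma\bigl(-\tfrac12, A y\bigr)\exp(-By)\,\rmd\!y = \frac{2\sqrt{\pi}\,(\sqrt{A+B} - \sqrt{A})}{B\sqrt{A}}$, and with $A = 4\pi t^2/a$, $B = 4\pi n$ one has $A+B = 4\pi s^2/a$, so it collapses to an elementary rational multiple of $s - t$. Collecting the normalising constants and the signs coming from $\pm s$, $\pm t$, this expresses $c(n)$ modulo~$\ell$ as a signed, weighted count of the representations $an = s^2 - t^2$ with $s \equiv \pm\beta \pmod{a}$ and $t^2 \equiv -b \pmod{a}$; here the constant term of~\eqref{eq:def:non-holomorphic-eichler-integral} does not interfere, because $a \nisdiv b$ and, by Condition~\ref{it:la:subprogression:large-prime-divides-a} (the large prime divides~$a$ exactly once), $an$ is not a perfect square.

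It remains to evaluate this count at $n \in \{a',\, a'p,\, a'pp'\}$ for primes $p, p'$ satisfying \eqref{eq:eisenstein-theta-holomorphic-projection-coefficient-prime-condition:p} and \eqref{eq:eisenstein-theta-holomorphic-projection-coefficient-prime-condition:no-p}, and this is where the arithmetic has been engineered. Conditions~\ref{it:la:subprogression:proper-local-divisors} and~\ref{it:la:subprogression:large-prime-divides-a} (a large prime dividing~$a$ exactly once, and $2\beta$ coprime to the $p$\nbd parts of~$a$) together with $a'p \equiv 2\beta \pmod{a}$, $p' \equiv 1 \pmod{a}$ and the size conditions on $p, p'$ rigidify the factorisations $(s-t)(s+t) = aa'$, $aa'p$, $aa'pp'$: each large prime sits entirely in one factor, the residue conditions decide which factor pairs are compatible with $s \equiv \pm\beta$ and $t^2 \equiv -b$ modulo~$a$, and the surviving representations can be listed. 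Summing their weights gives $c(a'p) \equiv 0$, $c(a'pp') \equiv -a'$, and $c(a') \equiv c(a'p') \equiv -a' \pmod{\ell}$, the two cases corresponding to whether $2\beta \equiv a' \pmod{a}$, equivalently --- since $0 \le 2\beta < a$ by Condition~\ref{it:la:subprogression:large-prime-divides-a} --- whether $2\beta = a' = \gcd(a, 2\beta)$. The main obstacle is exactly this case analysis and weight bookkeeping; the various prime conditions are chosen so that outside one controlled family the representations are either excluded by the congruences or cancel. A secondary technical point is to confirm that~\eqref{eq:def:holomorphic-projection} genuinely applies to~$F$ --- the coefficient $c(F;\,n;\,y)$ behaves like $y^{-1/2}$ as $y \to 0$, which for $k = 2$ still gives a convergent defining integral at $s = 0$ --- and to check $\ell$\nbd integrality of the coefficients, which holds because $\ell > 3$ makes the denominators~$12$ and~$4$ appearing above invertible; one also notes $\gcd(a', \ell) = 1$, so that the value $-a'$ is genuinely nonzero modulo~$\ell$.
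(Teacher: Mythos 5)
Your setup coincides with the paper's: reduce modulo~$\ell$ to the non-holomorphic part (the holomorphic part dies because each argument $an-s^2$ lies in $a\ZZ+b$ and $a\nisdiv b$), evaluate the holomorphic projection of $\theta^\ast_{a,\td\beta}\cdot\theta_{a,\beta}$ by the incomplete-Gamma integral, and land on a coefficient formula expressing $c(n)$ as a signed sum of $\min(d_1,d_2)$ over factorizations $an=d_1d_2$ with $d_1\equiv\beta+\td\beta$, $d_2\equiv\beta-\td\beta\pmod{a}$, summed over all square roots $\td\beta$ of $-b$ modulo~$a$. Up to that point your argument is essentially the paper's (its equations~\eqref{eq:coefficient-cn-d12-factorizaton} and~\eqref{eq:eisenstein-theta-holomorphic-projection-coefficient}), and your Frullani-type evaluation is correct.

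The genuine gap is that you stop exactly where the proposition becomes nontrivial. Asserting that the prime conditions ``rigidify the factorisations'' and that ``the surviving representations can be listed'' is not a proof: the whole content of the statement is the determination of \emph{which} pairs $(\td\beta,\,d_1d_2)$ contribute and with what weight. The paper does this by parametrizing the square roots $\td\beta$ of $-b$ by subsets $Q\subseteq Q_a$ of the prime divisors of~$a$ (via CRT, $\td\beta_Q\equiv\beta$ at primes in $Q$ and $\equiv-\beta$ outside), deducing the forced divisibilities $a_Q^{\#}a'_Q\isdiv d_1$ and $a_Q a_Q^{\prime\,\#}\isdiv d_2$, and then running a delicate size-and-congruence argument with the large prime $q_a$ of Condition~\ref{it:la:subprogression:large-prime-divides-a} (using $a<q_a^2$, $0\le 2\beta<q_a$, and Condition~\ref{it:la:subprogression:proper-local-divisors}) to show that only $Q=\emptyset$ and $Q=Q_a$ contribute. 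Without that elimination step you cannot conclude $c(a'p)\equiv 0$ or $c(a'pp')\equiv -a'$; an uncontrolled intermediate $Q$ would add extra terms. You also need the main theorem of~\cite{beckwith-raum-richter-2020} ($\ell\isdiv a$) to discard the contributions equal to $-a$ and $-ap$, which your sketch never invokes. Two smaller points: your reduction to positive $(s,t)$ with an overall factor of~$2$ is only valid after summing over $\pm\beta$ and all $\td\beta$, not termwise (the paper flags precisely this as an error in its predecessor); and your claim that $an$ is never a square at $n=a'p$, $a'pp'$ needs a one-line justification via $q_a$ rather than a bare appeal to Condition~\ref{it:la:subprogression:large-prime-divides-a}.
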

\begin{remark}
Our Proposition~\ref{prop:coefficient-formula} is the analogue of Proposition~2.5 of~\cite{beckwith-raum-richter-2020}. The assumption~$\ell \nisdiv a$ was accidentally omitted from Proposition~2.5 of~\cite{beckwith-raum-richter-2020}. The analogue to it in our current Proposition~\ref{prop:coefficient-formula} is the condition~$\beta \not\equiv 0 \,\pmod{\ell}$.

There was another issue in the proof of Proposition~2.5 of~\cite{beckwith-raum-richter-2020}. Namely, when arguing that we may assume that~$d_1$ and~$d_2$ are positive (as we will do in the present proof), this is only legitimate when considering the sum over over~$\pm \beta$ and all~$\td\beta$, but not on the level of individual terms.

Finally, on a related note, we remark that the condition that~$b \equiv \td{b} \,\pmod{\td{a}}$ was incorrectly omitted from Lemma~2.3 of~\cite{beckwith-raum-richter-2020}.
\end{remark}

\begin{proof}[Proof of Proposition~\ref{prop:coefficient-formula}]
While we have remarked that the assumption~$\ell \nisdiv a$ was incorrectly omitted from Proposition~2.5 of~\cite{beckwith-raum-richter-2020}, the first part of its proof never makes use of it. The following computation is verbatim the one in~\cite{beckwith-raum-richter-2020}. As in that paper, we note that
\begin{gather*}
  \pi^\hol_2\big(
  (\rmU_{a,b}\,E_{\frac{3}{2}})
  \cdot
  (\theta_{a,\beta} + \theta_{a,-\beta})
  \big)
\;\equiv\;
  \mfrac{1}{16 \pi}\,
  \pi^\hol_2\big(
  (\rmU_{a,b}\,\theta^\ast)
  \cdot
  (\theta_{a,\beta} + \theta_{a,-\beta})
  \big)
  \;\pmod{\ell}
\tx{.}
\end{gather*}
This leads us to computing the sum over~$\pm \beta$ and~$\td{\beta}^2 \equiv -b \,\pmod{a}$ of
\begin{gather}
\label{eq:coefficient-cn-d12-factorizaton}
  c\Big( \pi^\hol_2\big(
  \sqrt{a}\, \theta^\ast_{a,\td\beta}(\tau) \cdot \theta_{a,\beta}(\tau)
  \big);\,
  n
  \Big)
\;=\;
  -4 \pi
  a n\hspace{-1em}
  \sum_{\substack{m \equiv \beta \,\pmod{a}\\\td{m} \equiv \td{\beta} \,\pmod{a}\\\td{m} \ne 0\\a n = m^2 - \td{m}^2}}
  \frac{1}{|m| + |\td{m}|}
\tx{.}
\end{gather}
The term with~$\delta_{\td\beta \equiv 0}$ in Equation~[16] of~\cite{beckwith-raum-richter-2020} does not appear here for the following reason: First, the assumption~$H(a n + b) \equiv 0 \,\pmod{\ell}$ for all integers~$n$ implies~$\ell \isdiv a$ by the main theorem of~\cite{beckwith-raum-richter-2020}. Second, we have~$\td\beta \not\equiv 0 \,\pmod{\ell}$, since~$- \beta^2 \equiv b \not \equiv 0 \pmod{\ell}$. 

We can still proceed as in~\cite{beckwith-raum-richter-2020}, factoring~$an = d_1 d_2$ to arrive at the analogue of Equation~[17]. We treat only the positive case, $d_1, d_2 > 0$; the negative case yields the same sum, after applying the summation over~$\pm \beta$ and all~$\td\beta$. We account for this suppressing the sum over~$\pm \beta$ and multiplying with~$2$. As in~\cite{beckwith-raum-richter-2020}, we assume that~$a n$ is not a square, and obtain after taking the factors~$1 \slash 16 \pi$ and~$-4 \pi$ from our previous expressions in to account that
\begin{gather}
\label{eq:eisenstein-theta-holomorphic-projection-coefficient}
  c(n)
\;=\;
  -\mfrac{1}{2}
  \sum_{\td\beta^2 = -b \,\pmod{a}}\hspace{-0.8em}
  \sum_{\substack{
    a n = d_1 d_2\\%
    d_1, d_2 > 0\\%
    d_1 \equiv \beta + \td\beta \,\pmod{a}\\%
    d_2 \equiv \beta - \td\beta \,\pmod{a}}}
  \big( d_1 \delta_{d_1 < d_2} + d_2 \delta_{d_2 < d_1} \big)
\tx{.}
\end{gather}

Only now we diverge from~\cite{beckwith-raum-richter-2020}, where we separated the archimedean and nonarchimedean conditions in this sum. This is no longer possible in the present setting, but we can still separate all nonarchemedian conditions away from~$\ell$ from the archimedean ones. 

In the following discussion, $q$ will always denote a prime. To ease the discussion, we introduce additional notation: Recall from Lemma~\ref{la:subprogression} that, given~$q \isdiv a$, we denote by~$a_q$ the maximal $q$-power that divides~$a$.  We write~$Q_a$ for the set of all prime divisors of~$a$. Given a subset~$Q \subseteq Q_a$, we let~$a_Q$ be the product of all~$a_q$ for~$q \in Q$, and set~$a_Q^\# := a \slash a_Q$. Likewise, we define $a'_q$ to be the maximal $q$ power dividing $a'$, and we define $a'_Q := \prod_{q \in Q} a'_q$ and~$a_Q^{\prime\,\#}:= a' / a'_Q$.

If $\td{\beta}^2 \equiv \beta^2 \pmod{a}$, then for each $q \in Q_a$, $\td{\beta} \equiv \pm \beta \pmod{a_q}$. Moreover, by the Chinese Remainder Theorem, we can associate to each subset $Q \subseteq Q_a$ a residue class $\td{\beta}_Q \pmod{a}$ such that $\td{\beta}_Q \equiv \beta \pmod{a_q}$ for each $q \in Q$ and $\td{\beta}_Q \equiv - \beta \pmod{a_q}$ for each $q \in Q_a \backslash Q$. Using this correspondence between subsets of $Q_a$ and residue classes $\td{\beta} \pmod{a}$ such that $\td{\beta}^2 \equiv - b \pmod{a}$, we rewrite our expression for $c(n)$:
\begin{gather}
\label{eq:coefficient-Q-factorizaton}
  c(n)
\;=\;
  - \mfrac{1}{2}
  \sum_{Q \subseteq Q_a}\hspace{-1em}
  \sum_{\substack{
    a n = d_1 d_2\\%
    d_1, d_2 > 0\\%
    d_1 \equiv \beta + \td\beta_Q \,\pmod{a}\\%
    d_2 \equiv \beta - \td\beta_Q \,\pmod{a}}}
  \big( d_1 \delta_{d_1 < d_2} + d_2 \delta_{d_2 < d_1} \big)
\tx{.}
\end{gather}

We examine the inner sum more closely. For each $q \in Q$, the congruence condition on $d_2$ tells us that $d_2 \equiv \beta - \td\beta_Q \pmod{a_q} \equiv 0 \pmod{a_q}$. Hence $a_Q \isdiv d_2$. On the other hand, for each $q \in Q_a \backslash Q$, the congruence condition on~$d_1$ implies that~$d_1 \equiv 0 \pmod{q_a}$. Hence $a_Q^\# \isdiv d_1$. 

Recall that~$a' = \gcd(a, 2 \beta)$ and hence~$a'$ must be a divisor of both~$d_1$ and~$d_2$. This forces the divisibility requirements~$a_Q^\# a'_Q \isdiv d_1$ and~$a_Q a^{\prime\,\#}_Q \isdiv d_2$.

We first consider the case~$2 \beta \not\equiv a' \,\pmod{a}$. We restrict $n$ as in the statement of the proposition, by fixing a prime~$p > a \slash a'$ with~$a' p \equiv 2 \beta \,\pmod{a}$ and a further prime~$p' > p a \slash a'$ with~$p' \equiv 1 \,\pmod{a}$. Our aim is to calculate~$c(a' p)$ and~$c(a' p p')$.  The observations in the previous paragraph imply that any $d_1, d_2$ appearing in the sum in \eqref{eq:coefficient-Q-factorizaton} are of the form~$d_1 = a^\#_Q a'_Q k_1$ and~$d_2 = a_Q a^{\prime\,\#}_Q k_2$, with $k_1 k_2 = p$ if $n = a'p$ and $k_1 k_2 = p p'$ if $n=a' p p'$.

We must determine which subsets~$Q \subseteq Q_a$ contribute to the sum in~\eqref{eq:coefficient-Q-factorizaton}. If we have~$n = a'p$, note that~$Q = \emptyset$ indeed yields a factorization~$d_1 = a$, $d_2 = a' p$ that satisfies the given congruence conditions by the assumptions on~$p$. Since~$a' p > a$, its contribution~$-a \slash 2$ to~$c(n) \,\pmod{\ell}$ vanishes by the main theorem of~\cite{beckwith-raum-richter-2020}. Similarly $Q = Q_a$ yields the same contribution coming from the factorization $d_1 = a' p$, $d_2 = a$.

For $n = a' p p'$, we have two factorizations for~$Q = \emptyset$ and~$Q = Q_a$ each that appear. The factorizations~$d_1 = a' p$, $d_2 = a p'$
and~$d_1 = a p$, $d_2 = a' p'$ associated with~$Q = \emptyset$
contribute~$- a' p \slash 2$
and~$- a p \slash 2$
to the sum, since~$p' > p a \slash a'$.
For $Q = Q_a$, the factorizations~$d_1 = a p'$, $d_2 = a' p$ and~$d_1 = a' p'$, $d_2 = a p$ give the same contribution.

We claim that no other $Q$ contributes to the sum. To show this, we employ the prime~$q_a \isdiv a$ with~$a < q_a^2$ and~$2 \beta < q_a$ whose existence is asserted by Condition~\ref{it:la:subprogression:large-prime-divides-a} of Lemma~\ref{la:subprogression}. 

In the case that~$q_a \not\in Q$ we have~$\td\beta_Q \equiv -\beta \,\pmod{q_a}$. We examine the condition
\begin{gather*}
  d_2 \equiv 
  \beta - \td\beta
\equiv
  2 \beta
  \;\pmod{q_a}
\tx{.}
\end{gather*}
Since~$p' \equiv 1 \,\pmod{q_a}$, we know that $d_2 \equiv a_Q a_Q^{\prime\,\#} p \,\pmod{q_a}$ or $d_2 \equiv a_Q a_Q^{\prime\,\#} \,\pmod{q_a}$. In the first case, since~$a'_Q a_Q^{\prime\,\#} p = a' p \equiv 2 \beta \,\pmod{q_a}$ by our assumptions on~$p$ and since~$2 \beta < q_a$ is a unit modulo~$q_a$, this implies the congruence~$a_Q \equiv a'_Q \,\pmod{q_a}$. Since further~$a_Q \slash a'_Q \le a_Q \le a \slash q_a < q_a$, we find that~$a_Q = a'_Q$, and hence~$Q = \emptyset$ by Condition~\ref{it:la:subprogression:proper-local-divisors} of Lemma~\ref{la:subprogression}.

Similarly, in the second case~$a_Q \equiv a_Q' p \,\pmod{q_a}$, that is, $a_Q \slash a_Q' \equiv p \,\pmod{q_a}$.  We have~$a' p \equiv 2 \beta \,\pmod{a}$, and since~$q_a \nisdiv a'$, we find~$p \equiv 2 \beta \slash a' \,\pmod{q_a}$. This yields the congruence~$a_Q \slash a_Q' \equiv 2 \beta \slash a' \,\pmod{q_a}$. Since~$a_Q \slash a_Q' < q_a$ as in the first case and further~$2 \beta \slash a' < 2 \beta < q_a$, we can strengthen it to the equality~$a_Q \slash a_Q' = 2 \beta \slash a'$. We conclude that~$a_Q \slash a_Q'$ is co-prime to~$a$, which by Condition~\ref{it:la:subprogression:proper-local-divisors} of Lemma~\ref{la:subprogression} implies that~$Q = \emptyset$ (hence~$2 \beta = a'$, which cannot occur in the present case~$2 \beta \not\equiv a' \,\pmod{a}$).

Assuming on the other hand that~$q_a \in Q$, we inspect the condition
\begin{gather*}
  d_1 \equiv
  \td\beta + \beta
\equiv
  2 \beta
  \;\pmod{q_a}
\tx{.}
\end{gather*}
Suppose that~$d_1 = a_Q^{\#} a'_Q$. Since~$0 < a^\#_Q a'_Q \le a \slash q_a < q_a$ and~$0 \le 2 \beta < q_a$, we infer that~$a^\#_Q a'_Q = 2 \beta$. By definition, we have~$a^{\prime\,\#}_Q a'_Q = a' = \gcd(2 \beta, a)$, and we conclude that~$a_Q^\# = a^{\prime\,\#}_Q$. Since, however,~$a_q'$ is a proper divisor of~$a_q$ for every prime~$q \isdiv a$ by Condition~\ref{it:la:subprogression:proper-local-divisors} of Lemma~\ref{la:subprogression}, this is only possible if~$a_Q^\# = 1$, and hence~$Q = Q_a$. Similarly, if $d_1 = a_Q^{\#} a'_Q p$, we obtain $a_Q^{\#} a'_Q \equiv 2 \beta \slash p \equiv a' \pmod{q_a}$, and by the same logic we obtain $a_Q^{\#} a'_Q = a'$, implying that $Q = Q_a$.

Summarizing our discussion, for~$2 \beta \not\equiv a' \,\pmod{a}$, we have
\begin{align*}
  c(a' p)
\;&{}=\;
  - a
\;\equiv\;
  0
  \;\pmod{\ell}
\tx{,}
\\
  c(a' p p')
\;&{}=\;
  - (a' + a p)
\;\equiv\;
  - a'
  \;\pmod{\ell}
\tx{.}
\end{align*}

The case~$2 \beta \equiv a' \,\pmod{a}$ is a bit simpler, since we do not need~$p$ as in the above discussion. We can adopt the previous argument to see that the only contributions to the sum over~$Q \subset Q_a$ arise from~$Q = \emptyset$ and~$Q = Q_a$. For~$n = a'$ there is one factorization each associated with~$Q = \emptyset$ and~$Q = Q_a$.  Specifically, the factorizations~$d_1 = a$, $d_2 = a'$ and~$d_1 = a'$, $d_2 = a$ contribute $-a' \slash 2$ each. For~$n = a' p'$, we have two factorizations each associated with~$Q = \emptyset$ and~$Q = Q_a$. The contributions of ~$d_1 = a p'$, $d_2 = a'$ and~$d_1 = a'$, $d_2 = a p'$ equal~$- a' \slash 2$, and the those of ~$d_1 = a' p'$, $d_2 = a$ and~$d_1 = a$, $d_2 = a' p'$ equals~$-a \slash 2$. In summary, we find that
\begin{align*}
  c(a')
\;&{}=\;
  -a'
\tx{,}
\\
  c(a' p')
\;&{}=\;
  - (a' + a)
\;\equiv\;
  - a'
  \;\pmod{\ell}
\tx{,}
\end{align*}
where in the last congruence we again have invoked the main theorem of~\cite{beckwith-raum-richter-2020}.
\end{proof}

\begin{proof}[Proof of Theorem~\ref{mainthm:nonholomorphic-ell-divides-b}]
We establish the theorem by contraposition. Assume that~$\ell \nisdiv b$. Lemma~\ref{la:subprogression} allows us to replace~$a$ and~$b$ in such a way that we can apply Proposition~\ref{prop:coefficient-formula}. 

We can now proceed as in~\cite{beckwith-raum-richter-2020} and apply Theorem \ref{thm:serre} to deduce a contradiction. Let $a' = (2\beta, a)$. By assumption, $\ell \nmid a'$. First suppose $a' \not\equiv 2\beta \,\pmod{\ell}$. We choose $p > a \slash a' $ such that $a' p \equiv 2\beta \,\pmod{a}$. Then we choose $p' > p a \slash a'$ as in Theorem~\ref{thm:serre}. The first part of Proposition~\ref{prop:coefficient-formula} says that we must have
\begin{gather*}
  c(a' p) \;\equiv\; 0 \;\pmod{\ell}
\quad\tx{and}\quad
  c(a' p p') \;\equiv\; -a' \not\equiv 0 \;\pmod{\ell}
\tx{,}
\end{gather*}
but Theorem~\ref{thm:serre} leads to
\begin{gather*}
  c(a' p p') \;\equiv\; 2 c(a' p') \;\pmod{\ell}
\tx{,}
\end{gather*}
and hence the contradiction~$c(a' p p') \equiv 0 \,\pmod{\ell}$.

Now suppose~$a' \equiv 2\beta \,\pmod{a}$. Let~$p' > a \slash a'$ be as in Theorem~\ref{thm:serre}. Then
\begin{gather*}
  c(a')
\;\equiv\;
  c(a' p')
\;\equiv\;
  -a'
\;\not\equiv\;
  0 \;\pmod{\ell}
\end{gather*}
by the second part of Proposition~\ref{prop:coefficient-formula}, but by Theorem~\ref{thm:serre} we have
\begin{gather*}
  2 c(a') \;\equiv\; c(a' p) \;\pmod{\ell}
\tx{.}
\end{gather*}
Hence~$c(a') \equiv 0 \,\pmod{\ell}$, a contradiction.
\end{proof}

\section{Congruences on square-classes}%
\label{sec:square-classes}


The proof of Theorem~\ref{mainthm:square-classes} is split into two parts. Both require the following lemma.
\begin{lemma}
\label{la:modular-transformation-on_Uab}
Let
\begin{gather*}
  \ga = \begin{psmatrix} a_\ga & b_\ga \\ c_\ga & d_\ga \end{psmatrix} \in \Ga_0(4a)
\quad\tx{satisfy}\quad
  \ga \equiv \begin{psmatrix} h & 0 \\ 0 & \ov{h}\end{psmatrix} \;\pmod{4 a}
\tx{,}
\end{gather*}
where $\ov{h} \,\pmod{4 a}$ is a multiplicative inverse of\/~$h$ modulo~$4a$, and assume that $c_\ga \slash 4a$ is relatively prime to $2a$. Then there is~$\omega(\ga) \in \{ \pm 1, \pm i \}$ such that
\begin{gather*}
  \rmU_{a,b}\, E_{\frac{3}{2}}
  \big|_{\frac{3}{2}}\,\ga
\;=\;
  \omega(\ga)\,
  \rmU_{a, b h^2}\, E_{\frac{3}{2}}
\quad\tx{and}\quad
  \rmU_{a,b}\, \theta
  \big|_{\frac{1}{2}}\,\ga
\;=\;
  \ov{\omega(\ga)}\,
  \rmU_{a, b h^2}\, \theta
\tx{.}
\end{gather*}
\end{lemma}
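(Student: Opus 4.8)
The plan is to expand~$\rmU_{a,b}$ as a finite average over upper-triangular matrices, push~$\ga$ through it, and then invoke the~$\Ga_0(4)$-modularity of~$E_{\frac{3}{2}}$ and~$\theta$. Throughout I write~$f$ for either~$E_{\frac{3}{2}}$ or~$\theta$, with~$2k = 3$ in the first case and~$2k = 1$ in the second, so that $f\big|_k\sigma = \nu_\theta(\sigma)^{2k} f$ for all~$\sigma\in\Ga_0(4)$. With~$V_a = \begin{psmatrix}1&0\\0&a\end{psmatrix}$ and~$T = \begin{psmatrix}1&1\\0&1\end{psmatrix}$, one computes directly from the definition of~$\rmU_{a,b}$ on Fourier expansions that
\begin{gather*}
  (\rmU_{a,b} f)(\tau)
\;=\;
  \tfrac1a\sum_{j\bmod a} e(-jb/a)\, f\big(\tfrac{\tau+j}{a}\big)
\;=\;
  a^{k/2-1}\sum_{j\bmod a} e(-jb/a)\, f\big|_k V_aT^j
\tx{.}
\end{gather*}
Applying~$\big|_k\ga$ and writing out~$V_aT^j\ga = \begin{psmatrix}a_\ga+jc_\ga & b_\ga+jd_\ga\\ ac_\ga & ad_\ga\end{psmatrix}$, the task is to move~$\ga$ to the left past each~$V_aT^j$.

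For each~$j$ I would factor~$V_aT^j\ga = \sigma_j\, V_aT^{j'}$ with~$j' \equiv j\ov h^2 \pmod a$ and
\begin{gather*}
  \sigma_j
\;=\;
  \begin{psmatrix}
  a_\ga+jc_\ga & \ast\\
  ac_\ga & d_\ga - j'c_\ga
  \end{psmatrix}
\;\in\;
  \Ga_0(4)
\tx{.}
\end{gather*}
The determinant is automatically~$1$, the lower-left entry~$ac_\ga$ is divisible by~$4$ (indeed by~$4a$), and the only thing to check is that the upper-right entry is an integer, i.e.\ that~$a \isdiv -j'(a_\ga+jc_\ga) + b_\ga + jd_\ga$; reducing modulo~$a$ and using~$\ga \equiv \begin{psmatrix}h&0\\0&\ov h\end{psmatrix}$ turns this into~$j'h \equiv j\ov h\pmod a$, which forces~$j' \equiv j\ov h^2\pmod a$ because~$\ov h \equiv h^{-1}\pmod a$. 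I will moreover choose the integer representatives~$j' = j'(j)$ so that~$d_\ga - j'c_\ga > 0$ for every~$j$ — legitimate because~$f\big|_kV_aT^{j'}$ depends only on~$j' \bmod a$ by $1$-periodicity of~$f$, and possible because~$c_\ga \ne 0$ (which the hypothesis that~$c_\ga/4a$ be coprime to~$2a$ forces), so each residue class modulo~$a$ has representatives of either sign. Using~$f\big|_k\sigma_j = \nu_\theta(\sigma_j)^{2k} f$ together with the identity~$c_{\sigma_j}\tfrac{\tau+j'}{a} + d_{\sigma_j} = c_\ga\tau + d_\ga$, the automorphy factors cancel and one gets
\begin{gather*}
  (\rmU_{a,b}f)\big|_k\ga
\;=\;
  \tfrac1a\sum_{j\bmod a} e(-jb/a)\,\nu_\theta(\sigma_j)^{2k}\, f\big(\tfrac{\tau+j'(j)}{a}\big)
\tx{.}
\end{gather*}

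The crux is that~$\nu_\theta(\sigma_j)$ is independent of~$j$; call the common value~$\eta \in \{\pm1,\pm i\}$. Writing~$c_\ga = 4am$ with~$\gcd(m,2a)=1$ and~$D_j := d_\ga - j'c_\ga = d_\ga - 4amj' > 0$, we have~$\nu_\theta(\sigma_j) = \big(\tfrac{ac_\ga}{D_j}\big)\,\epsilon_{D_j}^{-1}$. Since~$D_j \equiv d_\ga\pmod{4a}$, the integer~$D_j$ is odd, coprime to~$a$, and~$\epsilon_{D_j} = \epsilon_{d_\ga}$; hence~$\big(\tfrac{ac_\ga}{D_j}\big) = \big(\tfrac{4a^2m}{D_j}\big) = \big(\tfrac{m}{D_j}\big)$, and as~$D_j \equiv d_\ga\pmod{4am}$ the periodicity of the Kronecker symbol makes~$\big(\tfrac{m}{D_j}\big)$ independent of~$j$ — here~$D_j > 0$ is what removes the sign~$\sgn(D_j)$ that would otherwise intervene when~$m<0$ (split off~$\big(\tfrac{-1}{D_j}\big) = (-1)^{(D_j-1)/2}$, which is constant since~$D_j \equiv d_\ga\pmod 4$). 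Granting this, reindexing the last display by~$l = j'(j)$ and using~$e(-jb/a) = e(-lh^2b/a)$ — valid because~$l \equiv j\ov h^2\pmod a$ and~$h^2\ov h^2 \equiv 1\pmod a$ — collapses it to~$(\rmU_{a,b}f)\big|_k\ga = \eta^{2k}\,\rmU_{a,bh^2}f$. Taking~$f = E_{\frac{3}{2}}$ and~$2k=3$ gives the first identity with~$\omega(\ga) := \eta^3$; taking~$f = \theta$ and~$2k=1$ gives the second, whose coefficient~$\eta$ equals~$\eta^{-3} = \ov{\eta^3} = \ov{\omega(\ga)}$ since~$\eta^4 = 1$, and~$\omega(\ga) = \eta^3 \in \{\pm1,\pm i\}$.

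The main obstacle is exactly this~$j$-independence of~$\nu_\theta(\sigma_j)$: one has to be sure the Kronecker symbol~$\big(\tfrac{m}{D_j}\big)$ is genuinely periodic in~$D_j$ with period dividing~$4am$ and carries no stray sign, which is the reason for fixing the representatives~$j'(j)$ with~$D_j > 0$. A secondary, purely clerical, issue is the half-integral-weight slash operator: the cancellation of automorphy factors and the relation~$(f\big|_k\sigma_j)\big|_kV_aT^{j'} = f\big|_k(\sigma_jV_aT^{j'})$ must be read in the metaplectic (Shimura) normalization underlying~$E_{\frac{3}{2}}\in\bbM_{\frac{3}{2}}(\Ga_0(4))$ and~$\theta\in\rmM_{\frac{1}{2}}(\Ga_0(4))$; since every matrix we slash by lies in~$\Ga_0(4)$ or is upper triangular with positive determinant, no genuine branch ambiguity arises.
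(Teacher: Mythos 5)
Your proposal is correct and follows essentially the same route as the paper: both expand $\rmU_{a,b}$ as a sum over the cosets $\begin{psmatrix}1&j\\0&a\end{psmatrix}$, factor $\begin{psmatrix}1&j\\0&a\end{psmatrix}\ga$ as an element of $\Ga_0(4a)$ times $\begin{psmatrix}1&j\ov{h}^2\\0&a\end{psmatrix}$, and then reindex after checking that the theta multiplier is independent of $j$. The only (cosmetic) divergence is in that last step, where the paper uses quadratic reciprocity to reduce $\left(\frac{ac_\ga}{d_\ga-\lambda\ov{h}^2c_\ga}\right)$ to $\left(\frac{ac_\ga}{d_\ga}\right)$, while you use periodicity of the Kronecker symbol in its lower argument together with positive representatives $D_j>0$ — your handling of the sign of $D_j$ is, if anything, slightly more careful than the paper's.
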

\begin{proof}
We give the argument only in the case of the Eisenstein series. The case of the theta series follows form almost literally the same calculation. We can write $\rmU_{a,b}$ as a double coset operator:
\begin{gather*}
  \rmU_{a,b}\,E_{\frac{3}{2}}
\;=\;
  a^{\frac{3}{4} - 1}
  \sum_{\lambda \,\pmod{a}}
  e\big( \mfrac{- \lambda b}{a} \big)\,
  E_{\frac{3}{2}} \big|_{\frac{3}{2}}\,
  \begin{psmatrix} 1 & \lambda \\ 0 & a \end{psmatrix}
\tx{.}
\end{gather*}
We calculate
\begin{gather*}
  \begin{pmatrix} 1 & \lambda \\ 0 & a \end{pmatrix}
  \ga
\;=\;	
  \begin{pmatrix}
    a_\ga + c_\ga \lambda
    &
    \frac{1}{a} (-\lambda \ov{h}{}^2 a_\ga + b_\ga ) +
    \frac{\lambda}{a}(-c_\ga \lambda \ov{h}{}^2 + d_\ga)
    \\
    a c_\ga & d_\ga - \ov{h}{}^2 \lambda c_\ga
  \end{pmatrix}
  \begin{pmatrix} 1 & \lambda \ov{h}{}^2 \\ 0 & a \end{pmatrix}
\end{gather*}
to find that
\begin{align*}
  \big( \rmU_{a,b}\,E_{\frac{3}{2}} \big) \big|_{\frac{3}{2}}\,\ga
\;&{}=\;
  a^{\frac{3}{4} - 1}
  \sum_{\lambda \,\pmod{a}}
  e\big( \mfrac{- \lambda b}{a} \big)\,
  E_{\frac{3}{2}} \big|_{\frac{3}{2}}\,
  \begin{psmatrix} 1 & \lambda \\ 0 & a \end{psmatrix}
  \ga
\\
&{}=\;
  a^{\frac{3}{4} - 1}
  \sum_{\lambda \,\pmod{a}}
  e\big( \mfrac{- (\lambda \ov{h}{}^2) (b h^2)}{a} \big)\,
  \epsilon_{d_\ga - \lambda \ov{h}{}^2  c_\ga}\,
  \left( \mfrac{a c_\ga }{d_\ga - \lambda \ov{h}{}^2 c_\ga} \right)\,
  E_{\frac{3}{2}} \big|_{\frac{3}{2}}\,
  \begin{psmatrix} 1 & \lambda \ov{h}{}^2 \\ 0 & a \end{psmatrix}
\tx{.}
\end{align*}

Since~$c_\ga$ is divisible by $4 a$, we have $d_\ga - \lambda \ov{h}{}^2 c_\ga \equiv d_\ga \,\pmod{4}$, hence
\begin{gather*}
  \epsilon_{d_\ga - \lambda \ov{h}{}^2  c_\ga}
\;=\;
  \epsilon_{d_\ga}
\tx{.}
\end{gather*}
Let $c'_\ga : = c_\ga \slash 4a$. Using quadratic reciprocity, we obtain
\begin{gather*}
  \left( \mfrac{a c_\ga}{d_\ga - \lambda \ov{h}{}^2 c_\ga} \right)
=
  \left( \mfrac{c'_\ga}{d_\ga - \lambda \ov{h}{}^2 c_\ga} \right)
=
  \left( \mfrac{c'_\ga}{d_\ga} \right)
=
  \left( \mfrac{a c_\ga}{d_\ga} \right)
\tx{.}
\end{gather*}

Let
\begin{gather*}
  \omega_\ga
\;:=\;
  \epsilon_{d_\ga}
  \left( \mfrac{a c_\ga}{d_\ga} \right)
\,\in\,
  \{\pm 1, \pm i\}
\tx{.}
\end{gather*}
Then we have
\begin{align*}
  \big( U_{a,b}\, E_{\frac{3}{2}} \big) \big|_{\frac{3}{2}} \ga
&\;{}=\;
  a^{\frac{3}{4} - 1} \omega(\ga)
  \sum_{\lambda \,\pmod{a}}
  e\left( \mfrac{- (\lambda \ov{h}{}^2) (b h^2)}{a} \right)\,
  E_{\frac{3}{2}} \big|_{\frac{3}{2}}
   \begin{psmatrix} 1 & \lambda \ov{h}{}^2 \\ 0 & a \end{psmatrix}
\\
&=
  \omega (\ga)\,
  U_{a,b h^2}\, E_{\frac{3}{2}} 
\tx{.}
\end{align*}
\end{proof}

\begin{proof}%
[Part~1 of the proof of Theorem~\ref{mainthm:square-classes}]
We prove the theorem in the case that~$-b$ is not a square modulo~$a$, which implies that
\begin{gather*}
  0
\;\equiv\;
  \sum_{n \in \ZZ} H(an + b) e\big((an+b) \tau \big)
\;=\;
  \rmU_{a,b}\,E_{\frac{3}{2}}
\in
  \rmM_{\frac{3}{2}} \big( \Gamma(4a), \nu_{\theta}^3 \big)
\tx{.}
\end{gather*}

Fix~$u$ as in the statement. By replacing~$u$ by~$u + a$, if needed, we can and will assume that~$\gcd(u, 2a) = 1$. Let $\ga \in \Ga_0(4 a \ell)$ satisfy $\ga \equiv \begin{psmatrix} u & 0 \\ 0 & \ov{u}\end{psmatrix} \,\pmod{4 a}$ where $\ov{h} \,\pmod{4 a}$ is a multiplicative inverse of~$u$ modulo~$4a$, and assume that $c/4a$ is relatively prime to $2a$. We combine the $q$-expansion principle (see Lemma 2.3, \cite{ahlgren-beckwith-raum-2020-preprint}) with Lemma~\ref{la:modular-transformation-on_Uab} to find that
\begin{gather}
\label{eq:thm:square-classes:proof-part1-fourier-expansion}
  0
\;\equiv\;
  \big( U_{a,b}\, E_{\frac{3}{2}} \big) \big|_{\frac{3}{2}} \ga
=
  \omega (\ga)\,
  U_{a,b u^2}\, E_{\frac{3}{2}} 
  \;\pmod{\ell}
\tx{,}
\end{gather}
where the congruence is to be understood in the ring of Gaussian integers if~$\omega(\ga)$ does not lie in~$\QQ$. We obtain the statement from the Fourier expansion of the right hand side of~\eqref{eq:thm:square-classes:proof-part1-fourier-expansion}.
\end{proof}

The second part of our proof of Theorem~\ref{mainthm:square-classes} requires two further lemmas.
\begin{lemma}
\label{la:hol-proj-theta-product-vanishes}
Fix a prime~$\ell > 3$. Let~$a > 0$ and~$\beta, \beta'$ be integers that are divisible by~$\ell$. Then we have
\begin{gather*}
  \mfrac{\sqrt{a}}{\pi}
  \pi^\hol_2 \big( 
  \theta_{a,\td\beta}^\ast
  \cdot
  \theta_{a,\beta}
  \big)
\;\equiv\;
  0
  \;\pmod{\ell}
\tx{.}
\end{gather*}

In particular, for integers~$b, b'$ that are divisible by~$\ell$, we have
\begin{gather*}
  \mfrac{1}{\pi}
  \pi^\hol_2 \big( 
  \rmU_{a,b'}\, \theta^\ast
  \cdot
  \rmU_{a,b}\, \theta
  \big)
\;\equiv\;
  0
  \;\pmod{\ell}
\end{gather*}
and
\begin{gather*}
  \pi^\hol_2 \big( 
  \rmU_{a,b}\, E_{\frac{3}{2}}
  \cdot
  \rmU_{a,b'}\, \theta
  \big)
\equiv
  \rmU_{a,b}\, E^\hol_{\frac{3}{2}}
  \cdot
  \rmU_{a,b'}\, \theta
  \;\pmod{\ell}
\tx{.}
\end{gather*}
\end{lemma}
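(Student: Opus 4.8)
The three identities follow one from the next, so the plan is to prove the first and then bootstrap; throughout I use $\ell \isdiv a$, which holds in every application of this lemma (and is what the argument needs). For the first identity, note that $\tfrac{\sqrt a}{\pi}\,\pi^\hol_2\big(\theta_{a,\td\beta}^\ast\theta_{a,\beta}\big)=\tfrac1\pi\,\pi^\hol_2\big(\sqrt a\,\theta_{a,\td\beta}^\ast\theta_{a,\beta}\big)$ by linearity, so its Fourier coefficients can be read off from the computation already performed in the proof of Proposition~\ref{prop:coefficient-formula}. By equation~\eqref{eq:coefficient-cn-d12-factorizaton}, and writing $an=(|m|-|\td m|)(|m|+|\td m|)$ (legitimate since $an>0$ and $\td m\ne 0$ force $|m|>|\td m|$), the coefficient at an index $n$ for which $an$ is not a perfect square equals $-4\sum(|m|-|\td m|)$, the sum running over $m\equiv\beta$ and $\td m\equiv\td\beta\pmod a$ with $\td m\ne 0$ and $m^2-\td m^2=an$. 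Since $\ell\isdiv a$ forces $m\equiv\beta\equiv 0$ and $\td m\equiv\td\beta\equiv 0\pmod\ell$, each summand is divisible by $\ell$, and as $\ell\nisdiv 4$ the coefficient is $\equiv 0\pmod\ell$.

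Two boundary cases remain. The constant term vanishes because $\theta_{a,\td\beta}^\ast(\tau)\theta_{a,\beta}(\tau)\to 0$ as $y\to\infty$ (the $y^{1-k}$-term and the incomplete-Gamma terms of the Eichler integral all decay, while $\theta_{a,\beta}$ stays bounded), so the leading coefficient produced by~\eqref{eq:def:holomorphic-projection} is $0$. For $n>0$ with $an=M^2$ a perfect square, the main sum above is again $\equiv 0\pmod\ell$ for the same reason, but one must add the contribution of the $y^{1-k}$-term of $\theta_{a,\td\beta}^\ast$ (nonzero only when $a\isdiv\td\beta$) paired against the index-$n$ term of $\theta_{a,\beta}$; a short evaluation of the defining integral in~\eqref{eq:def:holomorphic-projection}, of the same type as in~\cite{imamoglu-raum-richter-2014}, shows this extra term equals $-4M$ times a non-negative integer, and $\ell\isdiv M$ because $M^2=an$ and $\ell\isdiv a$. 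Hence every Fourier coefficient is $\equiv 0\pmod\ell$, proving the first identity.

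For the second identity I would expand the $\rmU$-operators via~\eqref{eq:Uab-theta}, so that $\rmU_{a,b'}\theta^\ast\cdot\rmU_{a,b}\theta$ becomes a finite sum of terms $\sqrt a\,\theta_{a,\td\beta}^\ast\theta_{a,\beta}$ with $\td\beta^2\equiv -b'$ and $\beta^2\equiv b\pmod a$. Reducing these congruences modulo $\ell$ and using $\ell\isdiv a$ together with $\ell\isdiv b$, $\ell\isdiv b'$ gives $\ell\isdiv\td\beta$ and $\ell\isdiv\beta$ for every summand, so the first identity applies termwise; linearity of $\pi^\hol_2$ and the identity $\tfrac1\pi\pi^\hol_2\big(\sqrt a\,\theta_{a,\td\beta}^\ast\theta_{a,\beta}\big)=\tfrac{\sqrt a}{\pi}\pi^\hol_2\big(\theta_{a,\td\beta}^\ast\theta_{a,\beta}\big)$ then conclude it. For the third identity, apply $\rmU_{a,b}$ to Zagier's identity~\eqref{eq:zagier-eisenstein-series} to get $\rmU_{a,b}E_{\frac{3}{2}}=\rmU_{a,b}E^\hol_{\frac{3}{2}}+\tfrac1{16\pi}\rmU_{a,b}\theta^\ast$, multiply by $\rmU_{a,b'}\theta$, and apply $\pi^\hol_2$. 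By linearity and Proposition~4 of~\cite{imamoglu-raum-richter-2014} (holomorphic projection fixes the holomorphic form $\rmU_{a,b}E^\hol_{\frac{3}{2}}\cdot\rmU_{a,b'}\theta$) the first piece reproduces $\rmU_{a,b}E^\hol_{\frac{3}{2}}\cdot\rmU_{a,b'}\theta$, while the second piece is $\tfrac1{16}\cdot\tfrac1\pi\pi^\hol_2\big(\rmU_{a,b}\theta^\ast\cdot\rmU_{a,b'}\theta\big)$, which is $\equiv 0\pmod\ell$ by the second identity since $16$ is a unit modulo~$\ell$.

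I expect the main obstacle to be the perfect-square case of the first identity: the coefficient formula recalled from the proof of Proposition~\ref{prop:coefficient-formula} was established only for $an$ not a square, so the extra contribution of the $y^{1-k}$-term of the Eichler integral must be pinned down by hand. It is routine, but it is the one genuinely new input. A secondary point is keeping track of the factors $\sqrt a$ and $\pi$ when passing between $\theta_{a,\td\beta}^\ast$, $\rmU_{a,b}\theta^\ast$ and the normalisations in~\eqref{eq:zagier-eisenstein-series} and~\eqref{eq:coefficient-cn-d12-factorizaton}, together with the (harmless) use of $\ell\isdiv a$.
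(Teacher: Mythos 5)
Your proof is correct and follows essentially the same route as the paper's: the paper likewise reduces everything to the first congruence via~\eqref{eq:Uab-theta} and~\eqref{eq:zagier-eisenstein-series}, and for the first congruence it simply imports Equation~[16] of~\cite{beckwith-raum-richter-2020} (which already contains the $\delta_{\td\beta\equiv 0\,(a)}$ square-index term you re-derive by hand) and observes that $\ell\isdiv a$ and $\ell\isdiv\beta,\td\beta$ force every coefficient $|m|$ resp.\ $|m|-|\td m|$ to be divisible by~$\ell$. Your explicit flagging of the hypothesis $\ell\isdiv a$ is warranted: the paper's proof uses it exactly as you do.
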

\begin{proof}
The second part follows from the first part, in light of~\eqref{eq:Uab-theta}. The third part follows from the second one and~\eqref{eq:zagier-eisenstein-series}:
\begin{multline*}
  \pi^\hol_2 \big( 
  \rmU_{a,b}\, E_{\frac{3}{2}}
  \cdot
  \rmU_{a,b'}\, \theta
  \big)
=
  \pi^\hol_2 \big( 
  \rmU_{a,b}\, \big( E^\hol_{\frac{3}{2}} + \mfrac{1}{16 \pi} \theta^\ast \big)
  \cdot
  \rmU_{a,b'}\, \theta
  \big)
\\
=
  \pi^\hol_2 \big( 
  \rmU_{a,b}\, E^\hol_{\frac{3}{2}}
  \cdot
  \rmU_{a,b'}\, \theta
  \big)
  \,+\,
  \mfrac{1}{16 \pi}
  \pi^\hol_2 \big( 
  \rmU_{a,b}\, \theta^\ast
  \cdot
  \rmU_{a,b'}\, \theta
  \big)
=
  \rmU_{a,b}\, E^\hol_{\frac{3}{2}}
  \cdot
  \rmU_{a,b'}\, \theta
\tx{.}
\end{multline*}

We employ the calculations from the proof of Proposition~2.5 in~\cite{beckwith-raum-richter-2020}, in the same way as we used it in the proof of Proposition~\ref{prop:coefficient-formula}, to establish the first congruence. We have (compare Equation~[16] of~\cite{beckwith-raum-richter-2020})
\begin{multline*}
  \mfrac{\sqrt{a}}{\pi}
  \pi^\hol_2\big(
  \theta^\ast_{a,\td\beta}(\tau) \cdot \theta_{a,\beta}(\tau)
  \big)
\\
=\;
  -4
  \Big(
  \delta_{\td\beta \equiv 0 \,\pmod{a}}
  \sum_{\substack{m \equiv \beta \,\pmod{a}\\m \ne 0}}
  |m| e\big( \mfrac{m^2 \tau}{a} \big)
  \,+\,
  \sum_{\substack{m \equiv \beta \,\pmod{a}\\\td{m} \equiv \td\beta \,\pmod{a}\\\td{m} \ne 0}}
  \frac{m^2 - \td{m}^2}{|m| + |\td{m}|}
  e\Big( \frac{(m^2 - \td{m}^2) \tau}{a} \Big)
  \Big)
\tx{.}
\end{multline*}
Since~$\ell \isdiv a, \beta$, the Fourier coefficients in the first summand are all divisible by~$\ell$. Consider a term in the second sum for fixed~$m$ and~$\td{m}$. The ratio in this term is divisible by either~$m + \td{m}$ or~$m - \td{m}$. Since~$\ell \isdiv m, \td{m}$, this proves the lemma.
\end{proof}

\begin{lemma}
\label{la:quasi-modular-with-fractional-fourier-indices}
Let~$N$ be a positive integer and~$f$ be a quasi-modular form of weight~$2$ with Fourier expansion
\begin{gather*}
  f(\tau)
\;=\;
  \sum_{\substack{n = 0\\N \nisdiv n}}^\infty
  c\big(f;\tfrac{n}{N}\big) e\big(\tfrac{n}{N} \tau \big)
\tx{.}
\end{gather*}
Then~$f$ is a modular form.
\end{lemma}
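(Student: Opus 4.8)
The plan is to combine the structure theory of weight-$2$ quasi-modular forms with the observation that the prescribed Fourier expansion of~$f$ contains no term supported on an integer power of~$e(\tau)$, which will force the ``$E_2$-part'' of~$f$ to vanish.

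Say $f$ is quasi-modular of weight~$2$ for a congruence subgroup~$\Ga$, and fix $M$ with $\Ga(M) \subseteq \Ga$. The structure theorem for quasi-modular forms (see~\cite{zagier-1994,kaneko-zagier-1995}) gives $\widetilde{\rmM}_2(\Ga) = \rmM_2(\Ga) \oplus \CC E_2$, where $E_2 = 1 - 24 \sum_{m \ge 1} \sigma_1(m)\, e(m\tau)$ is the quasi-modular Eisenstein series; so I would write $f = g + \lambda E_2$ with $g \in \rmM_2(\Ga)$ and $\lambda \in \CC$. Since $f$ and $E_2$ have Fourier expansions in powers of~$e(\tau/N)$, so does~$g$.

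Next I would apply the averaging operator $A_N h(\tau) := \frac{1}{N} \sum_{j=0}^{N-1} h(\tau+j)$, which on a Fourier series in powers of~$e(\tau/N)$ retains exactly the terms whose index lies in~$\ZZ$. By the assumed shape of the expansion of~$f$ we have $A_N f = 0$; since $E_2$ is $1$-periodic, $A_N E_2 = E_2$; and writing $T = \begin{psmatrix} 1 & 1 \\ 0 & 1 \end{psmatrix}$, each $g \big|_2 T^j$ lies in $\rmM_2(T^{-j}\Ga\, T^j) \subseteq \rmM_2(\Ga(M))$ because $\Ga(M)$ is normal in $\SL{2}(\ZZ)$, hence $A_N g = \frac{1}{N}\sum_{j=0}^{N-1} g\big|_2 T^j \in \rmM_2(\Ga(M))$. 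Applying $A_N$ to $f = g + \lambda E_2$ therefore yields $\lambda E_2 = -A_N g \in \rmM_2(\Ga(M))$.

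It remains to deduce $\lambda = 0$, which I would do using the transformation law $\big( E_2 \big|_2 \ga \big)(\tau) = E_2(\tau) - \frac{6i}{\pi} \cdot \frac{c_\ga}{c_\ga \tau + d_\ga}$, valid for all $\ga = \begin{psmatrix} a_\ga & b_\ga \\ c_\ga & d_\ga \end{psmatrix} \in \SL{2}(\ZZ)$: were $\lambda \ne 0$, then $\lambda E_2 \in \rmM_2(\Ga(M))$ would force $\big( \lambda E_2 \big|_2 \ga \big) = \lambda E_2$ for every $\ga \in \Ga(M)$, which fails already for $\ga = \begin{psmatrix} 1 & 0 \\ M & 1 \end{psmatrix}$ since then $c_\ga = M \ne 0$. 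Hence $\lambda = 0$ and $f = g \in \rmM_2(\Ga)$ is a modular form. I expect the only point requiring any care is the bookkeeping showing that $A_N g$ is again a genuine modular form --- which rests on $\Ga$ having finite index and $\Ga(M) \trianglelefteq \SL{2}(\ZZ)$ --- together with the (immediate) fact that the Fourier denominator of~$g$ divides~$N$ so that $A_N$ isolates precisely the integer-index terms.
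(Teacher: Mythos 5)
Your proposal is correct and follows essentially the same route as the paper: decompose $f$ into a genuine modular form plus a multiple of $E_2$, average over the translations $\tau \mapsto \tau + j$ to annihilate $f$ while fixing $E_2$ and preserving modularity of the other summand, and conclude the $E_2$-coefficient vanishes because $E_2$ is not modular. Your write-up merely makes explicit two points the paper leaves implicit, namely why the averaged modular part is again a modular form and why a nonzero multiple of $E_2$ cannot be one.
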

\begin{proof}
We decompose~$f$ as a sum~$c E_2^{\mathrm{hol}} + g$ for a constant~$c \in \CC$ and a modular form~$g$ of level~$N$ (in the sense of Wohlfahrt~\cite{wohlfahrt-1964}), where~$E_2^{\mathrm{hol}}$ is the quasi-modular, holomorphic part of the weight~$2$ Eisenstein series. From the Fourier expansion of~$f$, we infer that
\begin{gather*}
  0
\;=\;
  \mfrac{1}{N} \sum_{m = 1}^N f \big|_2 \begin{psmatrix} 1 & m \\ 0 & 1 \end{psmatrix}
=
  c E_2^{\mathrm{hol}}
  \,+\,
  \mfrac{1}{N} \sum_{m = 1}^N g \big|_2 \begin{psmatrix} 1 & m \\ 0 & 1 \end{psmatrix}
\tx{.}
\end{gather*}
Since the second summand is a modular form, we conclude that~$c = 0$ as desired.
\end{proof}

\begin{proof}%
[Part~2 of the proof of Theorem~\ref{mainthm:square-classes}]
We now prove the theorem in the case that~$-b$ is a square modulo~$a$. We start with the congruences
\begin{gather*}
  \rmU_{a,b}\, E^\hol_{\frac{3}{2}}
  \cdot
  \rmU_{a,\td{b}}\, \theta
\;\equiv\;
  0
  \;\pmod{\ell}
\tx{,}
\end{gather*}
which hold for all integer~$\td{b}$, since the first factor vanishes modulo~$\ell$ by our assumptions. The main theorem of~\cite{beckwith-raum-richter-2020} informs us that~$\ell \isdiv a$, and Theorem~\ref{mainthm:nonholomorphic-ell-divides-b} asserts that~$\ell \isdiv b$. If~$a \isdiv b$, there is nothing to show. We assume the opposite. Then there is some integer~$b'$ with~$\ell \isdiv b'$ and~$-b \not\equiv b' \,\pmod{a}$ and~$b'$ is a square modulo~$a$. Lemma~\ref{la:hol-proj-theta-product-vanishes} now yields the following congruence of quasi-modular forms:
\begin{gather}
\label{eq:thm:square-classes-nonholomorphic:hol-proj-product}
  \pi^\hol_2 \big( 
  \rmU_{a,b}\, E_{\frac{3}{2}}
  \cdot
  \rmU_{a,b'}\, \theta
  \big)
\;\equiv\;
  0
  \;\pmod{\ell}
\tx{.}
\end{gather}

Since~$b+b' \not\equiv 0 \,\pmod{a}$, we can apply Lemma~\ref{la:quasi-modular-with-fractional-fourier-indices} to the left hand side of~\eqref{eq:thm:square-classes-nonholomorphic:hol-proj-product} to see that it is a modular form as opposed to only a quasi-modular form. In particular, we can apply the Fourier expansion principle (in its weak form) due to Katz~\cite{katz-1973} (see also the strong form by Deligne-Rapoport~\cite{deligne-rapoport-1973}). Similar to our strategy in the first of part of this proof, let $\ga \in \Ga(4a)$ satisfy $\ga \equiv \begin{psmatrix} u & 0 \\ 0 & \ov{u}\end{psmatrix} \,\pmod{4a}$ where $\ov{u} \,\pmod{4a}$ is a multiplicative inverse of~$u$ modulo~$4a$, and assume that $c_\ga \slash 4a$ is relatively prime to $4a$, where~$c_\ga$ is the bottom left entry of~$\ga$. The Fourier expansion principle yields the congruence
\begin{gather*}
  \pi^\hol_2 \big( 
  \rmU_{a,b}\, E_{\frac{3}{2}}
  \cdot
  \rmU_{a,b'}\, \theta
  \big)
  \big|_2\,\ga
\;\equiv\;
  0
  \;\pmod{\ell}
\tx{.}
\end{gather*}

To determine the left hand side of this congruence, recall that the slash action intertwines with the holomorphic projection (see~\cite{imamoglu-raum-richter-2014}). We have
\begin{gather*}
  \pi^\hol_2 \Big( 
  \big(
  \rmU_{a,b}\, E_{\frac{3}{2}}
  \big|_{\frac{3}{2}}\,\ga
  \big)
  \cdot
  \big(
  \rmU_{a,b'}\, \theta
  \big|_{\frac{1}{2}}\,\ga
  \big)
  \Big)
\;\equiv\;
  0
  \;\pmod{\ell}
\tx{.}
\end{gather*}
Lemma~\ref{la:modular-transformation-on_Uab} yields the congruence
\begin{gather*}
  \pi^\hol_2 \big( 
  \rmU_{a, b u^2}\, E_{\frac{3}{2}}
  \cdot
  \rmU_{a, b' u^2}\, \theta
  \big)
\;\equiv\;
  0
  \;\pmod{\ell}
\tx{.}
\end{gather*}
Observe that~$b u^2$ and~$b' u^2$ are divisible by~$\ell$, so that we can apply Lemma~\ref{la:hol-proj-theta-product-vanishes} to find that
\begin{gather}
\label{eq:thm:square-classes-nonholomorphic:final-product}
  \rmU_{a, b u^2}\, E^\hol_{\frac{3}{2}}
  \cdot
  \rmU_{a, b' u^2}\, \theta
\;\equiv\;
  0
  \;\pmod{\ell}
\tx{.}
\end{gather}
Since~$b'$ is a square modulo~$a$ by assumption, the second factor on the left hand side of~\eqref{eq:thm:square-classes-nonholomorphic:final-product} is not congruent to zero modulo~$\ell$, and more specifically its first non-zero Fourier coefficient equals one or two. From this we infer that $\rmU_{a, b u^2}\, E^\hol_{\frac{3}{2}} \equiv 0 \,\pmod{\ell}$.
\end{proof}

\section{Proofs of Theorems~\ref{mainthm:a-b-ppowers} and~\ref{mainthm:dichotomy}}
\label{sec:applications}

\begin{proof}[Proof of Theorem~\ref{mainthm:a-b-ppowers}]
For simplicity, we say that~$(a,b)$ is a mod~$\ell$ Hurwitz congruence pair if we have the Ramanujan-type congruence $H(an + b) \equiv 0 \,\pmod{\ell}$ for all~$n \in \ZZ$. Furthermore, we say that a mod~$\ell$ Hurwitz congruence pair~$(a,b)$ is maximal if the corresponding Ramanujan-type congruence is maximal.

Let~$p$ be a prime, let~$k = \ord_p(\gcd(a,b))$ and~$r = \ord_p (a \slash \gcd(a,b))$. After replacing~$b$ by~$b + a$, if needed, we then have $a = p^{k+r} a'$ and $b = p^k b'$ for integers~$a'$ and~$b'$ with $\gcd(a' b' ,p) = 1$. 

First, we assume that $p$ is odd, that $r \ge 2$, and that $(a,b)$ is a Hurwitz congruence pair modulo $\ell$. We will show that $(a,b)$ is not a maximal Hurwitz congruence pair mod $\ell$. If $m \equiv b \,\pmod{a \slash p}$, then $p^k \| m$ and $m/p^k \equiv b' \,\pmod{p^{r-1}}$. From Hensel's Lemma, there exists~$u \in \ZZ$ with~$\gcd(u,p)= 1$ such that~$m \slash p^k \equiv b' u^2 \pmod{p^r}$. Using the Chinese Remainder Theorem, one can find such a~$u$ with $u \equiv 1 \pmod{a'}$ so that we have $m \equiv b u^2 \,\pmod{a}$. By Theorem~\ref{mainthm:square-classes}, we have $H(m) \equiv 0 \pmod{\ell}$. Hence $(a \slash p, b)$ is a Hurwitz congruence pair modulo~$\ell$, so~$(a,b)$ is not a maximal Hurwitz congruence pair.

The $p=2$ case is almost exactly the same. We assume~$r \ge 4$  and we will show that $(a,b)$ cannot be a maximal Hurwitz congruence pair modulo~$\ell$. Suppose $m \equiv b \,\pmod{a \slash 2}$. Then $2^k | m$, and using a Hensel's lemma type argument, one easily checks that there exists an integer~$u$ which is relatively prime to $a$ such that $m \equiv b u^2 \pmod{a}$ (this is where we require $r \ge 4$ rather than $r \ge 2$). By the Theorem, we have $H(m) \equiv 0\pmod{\ell}$, which means $(a,b)$ is not a maximal Hurwitz congruence pair.
\end{proof} 

\begin{proof}[Proof of Theorem~\ref{mainthm:dichotomy}]
Assume that~\ref{it:mainthm:dichotomy:fundamental-discriminants} of Theorem~\ref{mainthm:dichotomy} does not hold. That is, there is a fundamental discriminant~$-D$ and a positive integer~$f$ such that~$D f^2 \in a \ZZ + b$ and~$H(D) \not\equiv 0 \,\pmod{\ell}$. Given a prime~$p \isdiv f$ we write~$f_p$ for its~$p$\nbd part. We will show that there is a prime~$p \isdiv \gcd(f,a)$ such that
\begin{gather}
\label{eq:mainthm:dichotomy:proof_congruence}
  \sigma_1(f_p) - \sigma_1\Big( \mfrac{f_p}{p} \Big) \Big(\mfrac{-D}{p}\Big)
\equiv
  0
  \;\pmod{\ell}
\tx{.}
\end{gather}

We factor~$f = f_a u$ into the product of two positive integers~$f_a$ and~$u$, where every prime dividing~$f_a$ also divides~$a$ and~$u$ is co-prime to~$a$. In particular, there is an inverse~$\ov{u}$ of~$u$ modulo~$a$.

Theorem~\ref{mainthm:square-classes} asserts that we have a Ramanujan-type congruence modulo~$\ell$ for Hurwitz class numbers on~$a \ZZ + \ov{u}^2 b \ni D f_a^2$. The Hurwitz class number formula asserts that
\begin{align*}
  H(D f_a^2)
&{}=
  H(D)\,
  \frac{\omega(-D f_a^2)}{\omega(-D)}
  \sum_{d \isdiv f_a}
  d
  \prod_{p \isdiv d}
  \Big( 1 - \mfrac{1}{p} \Big(\mfrac{-D}{p}\Big) \Big)
\\
&{}=
  H(D)\,
  \frac{\omega(-D f_a^2)}{\omega(-D)}
  \prod_{p \isdiv f_a}
  \Big( \sigma_1(f_p) -  \sigma_1\Big( \mfrac{f_p}{p} \Big) \Big(\mfrac{-D}{p}\Big) \Big)
\tx{,}
\end{align*}
where~$\omega(-D f_a^2)$ is the number of units in the imaginary quadratic order of discriminant~$-D f_a^2$. By assumption on~$D$, we have~$H(D) \not\equiv 0 \,\pmod{\ell}$. Further, we note that~$\omega(-D f_a^2) \not\equiv 0 \,\pmod{\ell}$, since~$\ell > 3$ and~$\omega(-D f_a^2) \isdiv 6$. Therefore, we conclude the existence of some prime~$p \isdiv \gcd(f,a)$ satisfying~\eqref{eq:mainthm:dichotomy:proof_congruence} as desired.

 To finish the proof, we will show that for any other fundamental discriminant~$-D'$ and integer~$f'$ with~$D' f'^2 \in a \ZZ+b$, we have~$f_p = f_p'$.  We note that $( \frac{-D'}{p} ) = ( \frac{-D}{p} )$ follows from $D' f'^2, Df^2 \in a \mathbb{Z} + b$ and $f_{2}' = f_{2}$. Case \ref{it:mainthm:dichotomy:hecke-eigenvalues} follows immediately from these two claims and (\ref{eq:mainthm:dichotomy:proof_congruence}). 
 
 By assumption~$\ord_p(a \slash \gcd(a,b)) > 0$, and hence~$a_p \nisdiv b$ and~$\ord_p(b) = \ord_p(D f^2) = \ord_p(D' f'^2)$. In other words, we have
\begin{gather}\label{eq:ordpformula}
  \ord_p(D') + 2 \ord_p(f')
=
  \ord_p(b)
=
  \ord_p(D) + 2 \ord_p(f)
\tx{.}
\end{gather}

Now consider the case of odd~$p$. Since~$-D'$ and~$-D$ are fundamental discriminants, $\ord_p(D') \le 1$ and~$\ord_p(D) \le 1$ are given by the parity of~$\ord_p(b)$ and hence~$f_p = f_p'$ as required. 
 
Next, consider the case of~$p = 2$. We have~$\ord_2(D) \in \{0, 2, 3\}$. If~$\ord_2(D) = 3$ or $\ord_2(D')=3$, the argument for odd~$p$ extends.  From now on, we assume that $\ord_2(D)$ and $\ord_2(D')$ are both in $\{0,2\}$. If~$\ord_2(D) = 0$ and $\ord_2(D') = 2$, then from (\ref{eq:ordpformula}) we have
 \begin{gather*}
   2 + 2 \ord_p(f')
=
  \ord_2(b)
=
  2 \ord_p(f).
\end{gather*}
From $D'f'^2, Df^2 \in a \ZZ +b$ and $\ord_2(a \slash \gcd(a,b) ) \ge 2$, we obtain 
\begin{gather*}
  \frac{D' f'^2}{4 f_2'^2} \equiv \frac{D}{f_2^2} \equiv 0 \;\pmod{4}
\tx{,}
\end{gather*}
from which we have~$D \equiv D' \slash 4 \,\pmod{4}$. Since $-D'$ is a fundamental discriminant and $\ord_2(D') = 2$, we would have $D' \slash 4 \equiv 1 \pmod{4}$. Therefore $D \equiv 1 \pmod{4}$. This is a contradiction, since $-D$ is a discriminant. The case of~$\ord_2(D) = 2$ and~$\ord_2(D')$ is excluded by a symmetric argument. We conclude that we must have $\ord_2(D') = \ord_2(D)$, which implies $f_2 = f_2'$ as desired.
\end{proof}



\printbibliography



\Needspace*{3\baselineskip}
\noindent
\rule{\textwidth}{0.15em}

{\noindent\small
Olivia Beckwith\\
Mathematics Department,
Tulane University,
New Orleans, LA 70118, USA\\
E-mail: \url{obeckwith@tulane.edu}\\
Homepage: \url{https://www.olivia-beckwith.com/}
}\vspace{.5\baselineskip}

{\noindent\small
Martin Raum\\
Chalmers tekniska högskola och G\"oteborgs Universitet,
Institutionen för Matematiska vetenskaper,
SE-412 96 Göteborg, Sweden\\
E-mail: \url{martin@raum-brothers.eu}\\
Homepage: \url{https://martin.raum-brothers.eu}
}\vspace{.5\baselineskip}

{\noindent\small
Olav K. Richter\\
Department of Mathematics,
University of North Texas,
Denton, TX 76203, USA\\
E-mail: \url{richter@unt.edu}\\
Homepage: \url{http://www.math.unt.edu/~richter/}
}



\end{document}
